\newcommand{\mc}[1]{{\mathcal{#1}}}
\newcommand{\mf}[1]{{\mathfrak{#1}}}
\newcommand{\bb}[1]{{\mathbb{#1}}}
\DeclareMathOperator{\RE}{Re}
\DeclareMathOperator{\IM}{Im}
\renewcommand{\Re}{\RE}
\renewcommand{\Im}{\IM}
\newcommand{\qu}{\overline}
\DeclareMathOperator{\Clos}{Clos}
\DeclareMathOperator{\id}{id}
\DeclareMathOperator{\mt}{mt}
\DeclareMathOperator{\ran}{ran}
\DeclareMathOperator{\spn}{span}
\DeclareMathOperator{\supp}{supp}
\newlength{\maxlabwidth}
\newenvironment{axioms}[1]{
	\setlength{\maxlabwidth}{#1}
	\begin{list}{}{
	\setlength{\rightmargin}{2mm}
	\setlength{\leftmargin}{\maxlabwidth}\addtolength{\leftmargin}{2mm}
	\setlength{\labelsep}{0mm}
	\setlength{\labelwidth}{\maxlabwidth}
	\setlength{\itemindent}{0mm}
	
	}
	}{
	\end{list}
	}
\numberwithin{equation}{section}
\theoremstyle{plain}
	\newtheorem{lemma}{Lemma}[section]
	\newtheorem{theorem}[lemma]{Theorem}
	\newtheorem{corollary}[lemma]{Corollary}
\theoremstyle{definition}
	\newtheorem{definitioN}[lemma]{Definition}
\theoremstyle{remark}
	\newtheorem{remarK}[lemma]{Remark}
	\newtheorem{examplE}[lemma]{Example}
	\newtheorem{nremarK}[lemma]{}
\newcommand{\thlab}[1]{\thlabel{#1}\label{#1.}}
\renewcommand{\qedsymbol}{\raisebox{-2pt}{\large\ding{113}}}
\newcommand{\defendsymbol}{$\sslash$}
\newcommand{\qedsymbolsave}{\qedsymbol}
\newenvironment{definition}{\begin{definitioN}}{
	\renewcommand{\qedsymbolsave}{\qedsymbol}\renewcommand{\qedsymbol}{\defendsymbol}
	\popQED{\qed}\renewcommand{\qedsymbol}{\qedsymbolsave}\end{definitioN}}
\newenvironment{remark}{\begin{remarK}}{
	\renewcommand{\qedsymbolsave}{\qedsymbol}\renewcommand{\qedsymbol}{\defendsymbol}
	\popQED{\qed}\renewcommand{\qedsymbol}{\qedsymbolsave}\end{remarK}}
\newenvironment{proofof}[1]{\begin{proof}[\textit{Proof (of #1)}]}{\end{proof}}
\newcommand{\bibi}[5]{\bibitem[#5]{#1} \textsc{#2}:\ \textit{#3,}\ {#4.}}
\begin{document}


{\Large\bf
\begin{flushleft}
	De Branges' theorem on approximation problems of Bernstein type
\end{flushleft}
}
\vspace*{3mm}
\begin{center}
	{\sc Anton Baranov, Harald Woracek}
\end{center}

\begin{abstract}
	\noindent
	The Bernstein approximation problem is to determine whether or not the space of all
	polynomials is dense in a given weighted $C_0$-space on the real line. A theorem of
	L.\,de~Branges characterizes non--density by existence of an entire function of
	Krein class being related with the weight in a certain way. An analogous result
	holds true for weighted sup--norm approximation by entire functions of exponential type
	at most $\tau$ and bounded on the real axis ($\tau>0$ fixed). 

	We consider approximation in weighted $C_0$-spaces by functions belonging to a
	prescribed subspace of entire functions which is solely assumed to be invariant under
	division of zeros and passing from $F(z)$ to $\qu{F(\qu z)}$, and establish the precise
	analogue of de~Branges' theorem. For the proof we follow the lines of de~Branges'
	original proof, and employ some results 
	of L.\,Pitt. 
\end{abstract}
\begin{flushleft}
	{\small
	{\bf AMS MSC 2010:} Primary: 41A30. Secondary 30D15, 46E15, 46E22. \\
	{\bf Keywords:} weighted sup--norm approximation, Bernstein type problem, de~Branges'
		theorem
	}
\end{flushleft}


%
%
%
\section{Introduction}

Several classical problems revolve around the following general question: 
\begin{quote}
	\emph{Let $X$ be a Banach space of functions, and let $\mc L$ be a linear subspace of $X$. When is 
	$\mc L$ dense in $X$ ?}
\end{quote}
As a model example, let us discuss weighted sup-norm approximation by polynomials. 
Let $W:\bb R\to(0,\infty)$ be continuous, and assume that 
$\lim_{|x|\to\infty}\frac{x^n}{W(x)}=0$, $n\in\bb N$. 
Take for $X$ the space $C_0(W)$ of all continuous functions on the real line such that 
$\frac{f}{W}$ tends to zero at infinity, endowed with the norm 
$\|f\|_{C_0(W)}:=\sup_{x\in\bb R}\frac{|f(x)|}{W(x)}$. And 
take for $\mc L$ the space $\bb C[z]$ of all polynomials with complex coefficients. 
Then the above quoted question is known as the Bernstein problem. Several answers were obtained in the 1950's, e.g., in the 
work of Mergelyan \cite{mergelyan:1956}, Akhiezer \cite{akhiezer:1956}, Pollard \cite{pollard:1953}, or de~Branges \cite{debranges:1959b}. 
A comprehensive exposition can be found in \cite[Chapter VI.A--D]{koosis:1988}, let us also 
refer to the survey article \cite{lubinsky:2007} where also quantitative results are reviewed. 

Most characterizations of density use in some way the function 
\[
	\mf m(z):=\sup\big\{|P(z)|:\,P\in\bb C[z],\|P\|_{C_0(W)}\leq 1\big\}
	\,,
\]
also referred to as the Hall majorant associated with the weight $W$. 
The value $\mf m(z)$ is of course nothing but the norm of the point evaluation functional $F\mapsto F(z)$ on $\bb C[z]$ with respect 
to the norm $\|.\|_{C_0(W)}$, understanding `$\mf m(z)=\infty$' as point evaluation being unbounded. 

\begin{theorem}\thlab{N4}
	Let $W:\bb R\to(0,\infty)$ be a continuous weight, and assume that $\lim_{|x|\to\infty}x^nW(x)^{-1}=0$, $n\in\bb N$. 
	Then the following are equivalent:
	\begin{enumerate}[$(i)$]
		\item $\bb C[z]$ is dense in $C_0(W)$. 
		\item {\rm(Mergelyan, 1956)} We have $\mf m(z)=\infty$ for one 
			$($equivalently, for all$)$ $z\in\bb C\setminus\bb R$. 
		\item {\rm(Akhiezer, 1956)} We have 
			\[
				\int_{\bb R}\frac{\log\mf m(x)}{1+x^2}\,dx=\infty
				\,.
			\]
		\item {\rm(Pollard, 1953)} We have 
			\[
				\sup\Big\{\int_{\bb R}\frac{\log|P(x)|}{1+x^2}\,dx:\,P\in\bb C[z],\|P\|_{C_0(W)}\leq 1\Big\}=\infty
				\,.
			\]
	\end{enumerate}
\end{theorem}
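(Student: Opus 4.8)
The proof splits into a complex-analytic part, which produces the implications running from a density statement to a growth statement, and a constructive part, which manufactures annihilating measures from growth statements; the latter carries the weight. The basic tool for the first part is duality: since $f\mapsto f/W$ identifies $C_0(W)$ isometrically with the ordinary space $C_0(\bb R)$, its dual is $M(W)$, the space of complex Borel measures $\mu$ on $\bb R$ with $\int_{\bb R}W\,d|\mu|<\infty$, acting by $f\mapsto\int f\,d\mu$; thus $(i)$ fails precisely when some $\mu\in M(W)\setminus\{0\}$ annihilates $\bb C[z]$. Given such a $\mu$, I would study its Cauchy transform $\hat\mu(z):=\int_{\bb R}\frac{d\mu(t)}{t-z}$, holomorphic on $\bb C\setminus\bb R$. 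Because $\frac{P(t)-P(z)}{t-z}$ is a polynomial in $t$ for every $P\in\bb C[z]$, annihilation yields $P(z)\hat\mu(z)=\int_{\bb R}\frac{P(t)}{t-z}\,d\mu(t)$, hence $|P(z)\hat\mu(z)|\le\|P\|_{C_0(W)}\int_{\bb R}\frac{W(t)}{|t-z|}\,d|\mu|(t)$ and, taking the supremum over admissible $P$, $\mf m(z)\,|\hat\mu(z)|\le C_z<\infty$ whenever $\hat\mu(z)\ne0$. A division-of-zeros device removes the exceptional points: if $\hat\mu(z_1)=0$ then $d\mu_1(t):=\frac{d\mu(t)}{t-z_1}$ again lies in $M(W)\setminus\{0\}$, annihilates $\bb C[z]$, and satisfies $\hat\mu_1=\hat\mu/(\,\cdot\,-z_1)$, so after finitely many divisions one obtains an annihilator whose Cauchy transform is non-zero at $z_1$. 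Consequently $\neg(i)$ forces $\mf m(z)<\infty$ at every $z\in\bb C\setminus\bb R$; conversely, if $\mf m\equiv\infty$ off $\bb R$ then $\hat\mu\equiv0$ for every annihilator, so $\mu=0$ by injectivity of the Cauchy transform (Stieltjes inversion). This gives ``$(ii)\Rightarrow(i)$''.

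For the passage to the logarithmic integrals I would prove the exact identity
\[
 \sup\Big\{\int_{\bb R}\frac{\log|P(x)|}{1+x^{2}}\,dx:\,P\in\bb C[z],\ \|P\|_{C_0(W)}\le1\Big\}=\pi\log\mf m(i).
\]
It rests on the remark that reflecting a zero $w$ of $P$ to $\bar w$ leaves $|P|$ on $\bb R$ unchanged — so it preserves admissibility and the value of $\int_{\bb R}(1+x^{2})^{-1}\log|P|\,dx$ — while it does not decrease $|P(i)|$; hence on the left one may restrict to admissible $P$ with no zeros in $\{\Im z>0\}$, and $\mf m(i)$ is the supremum of $|P(i)|$ over that same class. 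For such $P$ the function $\log|P|$ is harmonic on $\{\Im z>0\}$ of sublinear growth, so the half-plane Poisson representation gives $\log|P(i)|=\frac1\pi\int_{\bb R}(1+x^{2})^{-1}\log|P(x)|\,dx$ exactly, and taking suprema yields the identity. In particular $(iv)$ holds iff $\mf m(i)=\infty$, which together with the first part gives $(i)\Leftrightarrow(ii)\Leftrightarrow(iv)$, the ``one equals all'' clause of $(ii)$ included. The implication $(iv)\Rightarrow(iii)$ is then immediate, since $\log|P(x)|\le\log\mf m(x)$ pointwise while the negative parts of $\log|P|$ and of $\log\mf m$ are integrable against $(1+x^{2})^{-1}\,dx$ (the latter because $\inf_{\bb R}W>0$).

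What remains is to close the circle with $(iii)\Rightarrow(i)$ — equivalently, Akhiezer's assertion that non-density forces $\int_{\bb R}(1+x^{2})^{-1}\log\mf m(x)\,dx<\infty$ — and this is the step I expect to be the main obstacle. The bound $\mf m(z)\le C_z/|\hat\mu(z)|$ is far too lossy here: the constant $C_z=\int|t-z|^{-1}W\,d|\mu|$ blows up as $z\to\bb R$, and $\hat\mu$ can have zeros and a non-trivial singular inner factor, so one cannot integrate the estimate up to the axis. I would instead exploit the freedom in the annihilator — optimising over $\mu\in M(W)\cap\bb C[z]^{\perp}$ and using that every such Cauchy transform lies in the Smirnov class $N^{+}$ of $\{\Im z>0\}$, so that $\log|\hat\mu|$ is Poisson-integrable on $\bb R$ — or else run a genuinely two-dimensional potential-theoretic argument bounding the subharmonic function $\log\mf m$ on $\{\Im z>0\}$ by its boundary behaviour on $\bb R$. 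Closely related, and the other point where an actual construction is unavoidable, is the converse to Mergelyan's criterion used above inside $(i)\Leftrightarrow(ii)$: from $\mf m(z_0)<\infty$ one must produce a \emph{non-zero} measure in $M(W)$ annihilating $\bb C[z]$, not merely extend the evaluation functional $\delta_{z_0}$ by Hahn--Banach; this delicate construction (carried out, e.g., via an auxiliary weight and a comparison of Hall majorants) is, together with the Akhiezer estimate, where the classical difficulty of the theorem concentrates.
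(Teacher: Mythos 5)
You should first be aware that the paper itself offers no proof of this statement: it is quoted as classical background, with references to Mergelyan, Akhiezer, Pollard and to Koosis's book, so your argument has to stand entirely on its own. The parts you actually carry out are fine: the identification of $C_0(W)'$ with measures $\nu$ satisfying $\int_{\bb R}W\,d|\nu|<\infty$, the estimate $\mf m(z)\,|\hat\mu(z)|\le C_z$ for an annihilator $\mu$, the division-of-zeros device (with the small caveat that $\hat\mu$ could vanish identically on one half-plane, which you can repair by passing to $\qu\mu$), the exact identity $\sup\big\{\int_{\bb R}(1+x^2)^{-1}\log|P(x)|\,dx\big\}=\pi\log\mf m(i)$ via reflection of zeros and the Poisson representation, and the trivial implication $(iv)\Rightarrow(iii)$.

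The genuine gap is that you never establish any implication \emph{out of} $(i)$, and you say so yourself. What you prove is $(ii)\Rightarrow(i)$, $(iv)\Leftrightarrow\mf m(i)=\infty\Rightarrow(i)$, and $(iv)\Rightarrow(iii)$ --- all arrows pointing into $(i)$ --- so the four conditions are not shown equivalent. The two missing links are precisely the substantive content of the theorem: first, Mergelyan's hard direction $(i)\Rightarrow(ii)$, i.e.\ that $\mf m(z_0)<\infty$ at a single non-real point forces non-density; as you note, extending $P\mapsto P(z_0)$ by Hahn--Banach gives a measure $\nu$ with $\int W\,d|\nu|<\infty$, but the natural annihilator $(t-z_0)\,d\nu(t)$ need not lie in $C_0(W)'$ because $\int|t-z_0|\,W\,d|\nu|$ may diverge, and you only gesture at the known repair (auxiliary weights, comparison of Hall majorants) without carrying it out. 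Second, Akhiezer's estimate needed for $(iii)\Rightarrow(i)$ (equivalently, that non-density forces $\int_{\bb R}(1+x^2)^{-1}\log\mf m(x)\,dx<\infty$); here you only list candidate strategies (Smirnov-class properties of $\hat\mu$, a potential-theoretic bound on $\log\mf m$) and correctly observe that your own estimate $\mf m(z)\le C_z/|\hat\mu(z)|$ degenerates at the real axis. A consequence of the first gap is that your claim that the ``one equals all'' clause of $(ii)$ is already included is unjustified: you show that $\mf m=\infty$ at one non-real point implies density, but the converse statement that density implies $\mf m\equiv\infty$ off $\bb R$ is exactly the missing Mergelyan step. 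As it stands, the proposal is a correct reduction of the theorem to its two classical hard lemmas, not a proof of it.
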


\noindent
The criterion of de~Branges is of different nature. 

\begin{theorem}[de~Branges, 1959]\thlab{N5}
	Let $W:\bb R\to(0,\infty)$ be a continuous weight, and assume that 
	$\lim_{|x|\to\infty}\frac{x^n}{W(x)}=0$, $n\in\bb N$. 
	Then the following are equivalent:
	\begin{enumerate}[$(i)$]
		\item $\bb C[z]$ is not dense in $C_0(W)$. 
		\item There exists an entire function $B$ which possesses the properties 
			\begin{itemize}
			\item[--] $B$ is not a polynomial. We have $B(z)=\qu{B(\qu z)}$. 
				All zeros of $B$ are real and simple. 
			\item[--] $B$ is of finite exponential type, and 
				$\int_{\bb R}\frac{\log^+|B(x)|}{1+x^2}\,dx<\infty$. 
			\item[--] $\displaystyle \sum_{x:\,B(x)=0}\frac{W(x)}{|B'(x)|}<\infty$. 
			\end{itemize}
	\end{enumerate}
	If $\bb C[z]$ is not dense in $C_0(W)$, the function $B$ in $(ii)$ can be chosen of
	zero exponential type. 
\end{theorem}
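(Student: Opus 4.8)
The plan rests on the duality picture for density. Since $W$ is continuous, positive and tends to $\infty$, it is bounded below on $\bb R$ by a positive constant, and the dual of $C_0(W)$ is the space of complex Borel measures $\mu$ on $\bb R$ with $\int_\bb R W\,d|\mu|<\infty$, paired by $\langle f,\mu\rangle=\int_\bb R f\,d\mu$. Hence, by Hahn--Banach, $(i)$ holds if and only if some nonzero such $\mu$ annihilates every polynomial; splitting $\mu$ into real and imaginary parts we may take it real. Since $x^n/W(x)$ is bounded for each $n$, such a $\mu$ has all power moments finite, and here they all vanish. The analytic device is the Cauchy transform $\Phi(z):=\int_\bb R\frac{d\mu(t)}{t-z}$: it is holomorphic on $\bb C\setminus\supp\mu$, in particular off $\bb R$; it is of bounded type in each open half-plane (a Cauchy transform of a finite measure); and, using $\frac1{t-iy}=-\sum_{k=0}^{n-1}\frac{t^{k}}{(iy)^{k+1}}+\frac{t^{n}}{(iy)^{n}(t-iy)}$ together with the vanishing of the moments, $|\Phi(iy)|\le C_n|y|^{-n-1}$ for every $n$, so $\Phi$ decays along $i\bb R$ faster than any power.

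I would first do $(ii)\Rightarrow(i)$, which is the easier half. Let $B$ be as in $(ii)$; its zeros $(t_k)$ are real, simple, and infinite in number, since a real entire function of finite exponential type with finitely many zeros is a polynomial. As $\inf_\bb R W>0$ and $x^n/W(x)$ is bounded, the third bullet of $(ii)$ states exactly that $\mu:=\sum_k\frac1{B'(t_k)}\delta_{t_k}$ is a nonzero element of $C_0(W)^*$ with finite moments, and it remains to see these moments vanish. The Cauchy transform $\Phi_\mu$ of $\mu$ and the function $-1/B$ have the same simple real poles with the same principal parts, so $E:=\Phi_\mu+1/B$ is entire. Both $\Phi_\mu$ and $1/B$ (reciprocal of a Krein-class function, hence zero-free off $\bb R$) are of bounded type in each half-plane, so $E$ is too; moreover $\Phi_\mu(iy)\to0$ and $1/B(iy)\to0$ (as $B$ has infinitely many zeros, $|B(iy)|\to\infty$), so $E(iy)\to0$ as $y\to\pm\infty$. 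An entire function of bounded type in both half-planes that tends to $0$ along $i\bb R$ is of exponential type $0$, and one of exponential type $0$ bounded on $i\bb R$ is constant; thus $E\equiv0$ and $\Phi_\mu=-1/B$. Then $\Phi_\mu$ decays along $i\bb R$ faster than any power, so its asymptotic expansion $\Phi_\mu(z)\sim-\sum_{k\ge0}\big(\int_\bb R t^{k}\,d\mu\big)z^{-k-1}$ at infinity forces all moments of $\mu$ to vanish. Hence $\mu$ annihilates $\bb C[z]$ and $(i)$ holds.

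For $(i)\Rightarrow(ii)$ I would follow de~Branges, using results of Pitt. The set $\mc A$ of measures with $\int_\bb R W\,d|\mu|\le1$ annihilating all polynomials is convex, balanced and weak-$*$ compact (each condition $\langle t^n,\mu\rangle=0$ is weak-$*$ continuous because $t^n\in C_0(W)$); by $(i)$ it is not $\{0\}$, so Krein--Milman gives a nonzero extreme point $\mu_0$. The key structural step is that the extremality of $\mu_0$, together with the bounded type and the super-polynomial decay of its Cauchy transform $\Phi$, forces $\mu_0$ to be carried by a discrete set $(s_k)$ (equivalently $\Phi$ to be meromorphic on $\bb C$) --- a finitely supported measure with vanishing moments is zero, so there are infinitely many atoms --- and lets one write $\Phi=G/B_0$ with $B_0$ the real canonical product over $(s_k)$ and $G=\Phi B_0$ both in the Krein class. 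Multiplying $B_0$ by a suitable zero-free entire factor interpolating prescribed real values at the $s_k$ produces a real entire $B$, not a polynomial, with simple real zeros $\{s_k\}$, in the Krein class, and with $B'(s_k)=1/\mu_0(\{s_k\})$; a further thinning/limiting argument --- permissible because all moments of $\mu_0$ vanish --- lets one take $B$ of exponential type $0$, yielding the closing assertion. Finally $\sum_{B(t)=0}\frac{W(t)}{|B'(t)|}=\sum_k W(s_k)\,|\mu_0(\{s_k\})|=\int_\bb R W\,d|\mu_0|<\infty$, and $\sum_k\frac1{B'(s_k)}\delta_{s_k}=\mu_0$; so all bullets of $(ii)$ hold.

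The expected main obstacle is the key structural step of the last paragraph: showing that extremality of $\mu_0$ forces its Cauchy transform to be meromorphic of finite --- indeed zero --- exponential type with real poles, and carrying out the interpolation so that $B$ acquires simple zeros and exponential type $0$ without spoiling the weighted summability $\sum_k W(s_k)|\mu_0(\{s_k\})|<\infty$. This is the technical core of de~Branges' proof; it is here that the closure of the Krein class under division by real zeros (and under $F(z)\mapsto\qu{F(\qu z)}$, which for real $B$ is vacuous) and a normal-families passage to the limit are used, and where the results of Pitt enter. By comparison, the Phragm\'en--Lindel\"of step in $(ii)\Rightarrow(i)$ --- that $E\equiv0$ --- is routine once the standard growth estimates for the Krein class are in hand.
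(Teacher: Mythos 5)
Your half $(ii)\Rightarrow(i)$ is essentially sound and is in substance the paper's own sufficiency argument (Lemma \thref{N6} together with the last proof in Section 5): the paper encodes the same residue identity as a Lagrange-type interpolation formula for $zF(z)$ and evaluates it at $z=0$, while you phrase it as $\Phi_\mu=-1/B$ plus a moment expansion along $i\bb R$; both versions rest on the same step ``entire, of bounded type in $\bb C^+$ and $\bb C^-$, tending to $0$ along $i\bb R$, hence identically zero''. Two small repairs are needed: the claim ``a real entire function of finite exponential type with finitely many zeros is a polynomial'' is false as stated ($e^z$ is a counterexample); you must also invoke $\int_{\bb R}\log^+|B(x)|(1+x^2)^{-1}dx<\infty$ to exclude a factor $e^{az}$, and it is the same Cartwright/Krein condition that justifies both that $1/B$ is of bounded type in the half-planes and that $|B(iy)|$ grows faster than any power.

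The genuine gap is in $(i)\Rightarrow(ii)$, which is the substance of the theorem: your final paragraph is a plan that defers precisely the hard steps, and the specific constructions you propose would not work. Extremality of $\mu_0$ does not by itself give discreteness or meromorphy of the Cauchy transform; what the Krein--Milman argument actually yields (de~Branges' lemma, \thref{N18}) is that the annihilator of $\bb C[z]$ inside $L^\infty(\mu_0)$ is one-dimensional, spanned by a unimodular $g_0$, and discreteness of $\mu_0$, injectivity of restriction, and the passage back to entire functions must then be extracted via Pitt-type results (\thref{N19}: finiteness and continuity of the majorant $\mf m$, the continuous extension operator $\tilde\iota$, bounded type of quotients), as in Steps 1--3 of the paper. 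More seriously, the step ``multiply $B_0$ by a zero-free entire factor interpolating prescribed real values at the $s_k$'' cannot be carried out: a zero-free entire function of finite exponential type is $ce^{az}$, and with the required log-integrability it is constant, so it cannot match infinitely many prescribed values $1/(B_0'(s_k)\mu_0(\{s_k\}))$; likewise ``thinning'' the zero set is not how zero type is obtained. The paper instead constructs $B(z)=(z-t_0)(z-t)H_t(z)$ with $H_t=\tilde\iota h_t$ built directly from the extreme measure, so that $B$ automatically has simple zeros exactly on $\supp\mu_0$ with $|B'(x)|=\mu_0(\{x\})^{-1}$ there (giving your third bullet), and zero exponential type is a separate mean-type computation ($\sup_{F}\mt\frac FB=0$, Section 4), which uses closedness of the spaces $\mc K_{(\tau_+,\tau_-)}$ and the codimension-one position of $\qu{\rho\mc L}$ in $L^1(\mu_0)$. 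Until you supply arguments for these steps, the necessity direction remains unproved.
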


\noindent
De~Branges' original proof uses mainly basic functional analysis (the Krein--Milman
theorem) and complex analysis (bounded type theory). Several other approaches are known; for 
example \cite{sodin.yuditskii:1996} where the result is obtained as a consequence of a deep study
of Chebyshev sets, or \cite{poltoratski:2011} where some properties of singular (Cauchy-) 
integral operators are invoked.

Also other instances for $X$ and $\mc L$ in the question quoted in the very first paragraph 
of this introduction are classical objects of study. 
\\[1mm]
\textit{Working in other spaces $X$:}
Density of polynomials in a space $L^2(\mu)$ or $L^1(\mu)$ is closely related with the 
structure of the solution set of the Hamburger power moment problem generated by the sequence 
$(\int_{\bb R}x^n\,d\mu(x))_{n\in\bb N_0}$. In fact, by theorems of M.\,Riesz and 
M.A.\,Naimark, density is equivalent to extremal properties of $\mu$ in this solution set, cf.\ 
\cite[\S 2.3]{akhiezer:1961}. Also a characterization in terms of the
norms of point evaluation maps was obtained already at a very early stage (in the 1920's) by
M.\,Riesz, see, e.g., \cite[Chapter V.D]{koosis:1988}. By the recent 
work of A.G.\,Bakan the results for 
$L^p$-spaces and weighted $C_0$-spaces are closely related, see \cite{bakan:2008}. 
\\[1mm]
\textit{Approximation with functions different from polynomials:} 
For example, let us mention approximation with entire functions of finite exponential 
type. There the space $\mc L$ is taken, e.g., as the set of all finite linear
combinations of exponentials $e^{i\lambda x}$, $|\lambda|\leq a$, (with some fixed $a>0$) or 
as the space of all Fourier transforms of $C^\infty$-functions compactly supported in $(-a,a)$. 
Analogues of the mentioned theorems are again classical, see, e.g., 
\cite[Chapter VI.E--F]{koosis:1988}. A proof of the 'exponential version' of de~Branges' theorem 
following the method of \cite{sodin.yuditskii:1996} is given in \cite{borichev.sodin:2011}.
\\[2mm]
In the 1980's, L.\,Pitt proposed a unifying (and generalizing) approach to approximation problems
of this kind, cf.\ \cite{pitt:1983}. He considered quite general instances of $X$ and 
$\mc L$: the Banach space $X$ is only assumed to be a so-called regular function space, 
and the space $\mc L$ can be any space of entire functions which is closed with respect to 
forming difference quotients and passing from $F(z)$ to $\qu{F(\qu z)}$ and which is contained 
injectively in $X$. The class of regular function spaces is quite large; for example it includes 
spaces $C_0(W)$, or $L^p(\mu)$, $p\in[1,\infty)$, or weighted Sobolev spaces. 
Under some mild regularity conditions Pitt shows analogues of the results mentioned in 
\thref{N4} above, as well as versions of some more detailed results in the same flavour which 
we did not list above. A general version of \thref{N5}, however, is not given. 

%
%
\begin{flushleft}
	\textbf{The present contribution.}
\end{flushleft}
\vspace*{-2mm}
Our aim in the present paper is to prove a theorem of de~Branges type for weighted sup-norm 
approximation by entire functions of a space $\mc L$ as considered in the work of Pitt: 
Namely, the below \thref{N3}. To establish this theorem, we follow de~Branges' original method. 

Independently of the present work, M.\,Sodin and P.\,Yuditskii have generalized the 
method first used in \cite{sodin.yuditskii:1996}, and obtained precisely the same 
result\footnote{This is work in progress communicated to us 
by M.\,Sodin, who also presented it at the 'International Symposium on Orthogonal Polynomials 
and Special Functions -- a Complex Analytic Perspective'  (June 11--15, 2012, Copenhagen).}.

In order to concisely formulate the presently discussed general version of \thref{N5}, 
we introduce some notation. First, for completeness, the class of weighted spaces under consideration. 

\begin{definition}\thlab{N24}
	We call a function $W:\bb R\to(0,\infty]$ a weight, 
        if $W$ is lower semicontinuous 
	and not identically equal to $\infty$. 

	We denote by $C_0(W)$ the space of all continuous functions $f$ on the real line such that 
	$\lim_{|x|\to\infty} \frac{|f(x)|}{W(x)} =0$ (here $\frac{f(x)}{W(x)}$ is understood as $0$, if $W(x)=\infty$). 
	This linear space is endowed with the seminorm 
	\[
		\|f\|_{C_0(W)}:=\sup_{x\in\bb R}\frac{|f(x)|}{W(x)}
		\,.
	\]
\end{definition}

\noindent
Clearly, $\|.\|_{C_0(W)}$ induces a locally convex vector topology on $C_0(W)$ and all topological notions are understood 
with respect to it. Notice that this topology need 
not be Hausdorff. In fact, the seminorm $\|.\|_{C_0(W)}$ is a norm if and only if the set 
\[
	\Omega:=\big\{x\in\bb R:\,W(x)\neq\infty\big\}
\]
is dense in $\bb R$. 

Next, a terminology for the spaces $\mc L$ with which we deal. 

\begin{definition}\thlab{N1}
	Let $\mc L$ be a linear space. We call $\mc L$ an algebraic de~Branges space, if 
	\begin{axioms}{12mm}
	\item[B1] The elements of $\mc L$ are entire functions. 
	\item[B2] If $F\in\mc L$ and $w\in\bb C$ with $F(w)=0$, then also the function 
		$\frac{F(z)}{z-w}$ belongs to $\mc L$. 
	\item[B3] If $F\in\mc L$, then also the function $F^\#(z):=\qu{F(\qu z)}$ belongs to 
		$\mc L$. 
	\end{axioms}
\end{definition}

\noindent
The appropriate weighted analogue of the class of entire functions appearing in 
de~Branges' theorem, which is also referred to as the Krein-class, is the following. 

\begin{definition}\thlab{N2}
	Let $\mc L$ be an algebraic de~Branges space. Let $W:\bb R\to(0,\infty]$ be a lower 
	semicontinuous function, and assume that $\mc L\subseteq C_0(W)$. 
	Then we define the $W$-weighted Krein class $\mc K(\mc L,W)$ 
	with respect to $\mc L$ as the set of all entire function $B$ which satisfy 
	\begin{axioms}{12mm}
	\item[K1] The function $B$ is not zerofree, and all its zeros are real and simple. 
		We have $B=B^\#$. 
	\item[K2] For each $F\in\mc L$, the function $\frac FB$ is of bounded type in both 
		half-planes $\bb C^+$ and $\bb C^-$. 
	\item[K3] For each $F\in\mc L$ we have 
		\[
			|F(iy)|=o\big(|B(iy)|\big),\quad y\to\pm\infty
			\,.
		\]
	\item[K4] $\displaystyle \sum_{x:\,B(x)=0}\frac{W(x)}{|B'(x)|}<\infty$. 
		\footnote{Convergence of this series implicitly includes the requirement that $W(x)<\infty$
		whenever $x$ is a zero of $B$.}
	\end{axioms}
\end{definition}

\noindent
Finally, we denote $\mt h:=\limsup_{y\to+\infty}\frac 1y\log|h(iy)|\in[-\infty,\infty]$ whenever $h$ is a complex-valued function 
defined (at least) on the ray $i\bb R^+$, and refer to this number as the mean type of $h$\footnote{Concerning this notation, 
we do not assume that $h$ is subharmonic or even analytic.}. 

The statement we are going to prove in this paper can now be formulated as follows. 

\begin{theorem}\thlab{N3}
	Let $\mc L$ be an algebraic de~Branges space. Let $W:\bb R\to(0,\infty]$ be a weight, i.e.\ 
	lower semicontinuous and not identically equal to $\infty$, and assume that $\mc L\subseteq C_0(W)$. 
	Then the following are equivalent:
	\begin{enumerate}[$(i)$]
		\item $\mc L$ is not dense in $C_0(W)$.
		\item $\mc K(\mc L,W)$ is nonempty.
	\end{enumerate}
	If $\mc L$ is not dense in $C_0(W)$, then there exists a function 
	$B\in\mc K(\mc L,W)$ with 
	\begin{equation}\label{N27}
		\sup\Big\{\mt\frac FB:\,F\in\mc L\Big\}=0
		\,.
	\end{equation}
\end{theorem}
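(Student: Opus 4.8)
\medskip\noindent\textbf{Plan of the proof.}
The plan is to treat the two implications separately. The dual of $C_0(W)$ may be identified with the space of complex regular Borel measures $\mu$ on $\bb R$ satisfying $\int_{\bb R}W\,d|\mu|<\infty$, paired with $C_0(W)$ by $\langle f,\mu\rangle=\int f\,d\mu$ (in the non-Hausdorff case one only obtains, modulo the kernel of the seminorm, measures supported on $\qu\Omega$), so by Hahn--Banach $(i)$ is equivalent to the existence of a nonzero such $\mu$ with $\int F\,d\mu=0$ for all $F\in\mc L$. Then $(ii)\Rightarrow(i)$ is the easy constructive direction, while $(i)\Rightarrow(ii)$ follows de~Branges' original method --- an extreme-point argument combined with bounded-type theory --- using as input the analogues of \thref{N4} that Pitt established for regular function spaces; the normalization \eqref{N27} will be read off from that construction.

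For $(ii)\Rightarrow(i)$, fix $B\in\mc K(\mc L,W)$ and let $(x_n)$ list its real simple zeros. Put $\mu:=\sum_n\frac{1}{B'(x_n)}\delta_{x_n}$. By K4 and the estimate $|F(x_n)|\leq\|F\|_{C_0(W)}W(x_n)$ one has $\int_{\bb R}W\,d|\mu|=\sum_n\frac{W(x_n)}{|B'(x_n)|}<\infty$, so $\mu\in C_0(W)^*$; it is nonzero because, the zeros of $B$ being isolated and $W(x_{n_0})<\infty$ (footnote to K4), a continuous function supported near a single zero $x_{n_0}$ lies in $C_0(W)$ and is not annihilated by $\mu$. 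Finally $\mu$ annihilates $\mc L$, i.e.\ $\sum_n\frac{F(x_n)}{B'(x_n)}=0$ for every $F\in\mc L$; this is the weighted form of the quadrature identity underlying de~Branges' theorem: by K2 the meromorphic function $F/B$ is of bounded type in $\bb C^+$ and $\bb C^-$, by K3 it has nonpositive mean type in each half-plane and tends to $0$ along $i\bb R^\pm$, and K4 makes the residue series absolutely convergent, whereupon a standard argument from bounded-type theory (Krein's theorem on functions of bounded type, together with the decay at $i\infty$) forces the sum to vanish. Hence $\mc L$ is not dense in $C_0(W)$.

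For $(i)\Rightarrow(ii)$, start from a nonzero annihilating measure and, using that by B3 the space $\mc L$ is spanned by its real points, reduce to a nonzero \emph{real} annihilating measure; normalizing by $\int W\,d|\mu|\leq1$, the set of such measures is convex and weak-$*$ compact, so by Krein--Milman it has a nonzero extreme point $\mu_0$. Form the Cauchy transform $\mf F(z):=\int_{\bb R}\frac{d\mu_0(t)}{t-z}$, which one shows to be analytic off $\bb R$, of bounded type in $\bb C^\pm$, with $\mf F(iy)\to0$ and with boundary jump recovering $\mu_0$. The annihilation together with axiom B2 says exactly that for each $F\in\mc L$ the Cauchy transform of $F\,d\mu_0$ vanishes at every zero of $F$ (to the correct multiplicity); writing $\mf F_{F\mu_0}(z)=F(z)\mf F(z)+E_F(z)$ with $E_F$ entire, this forces $E_F$ to be divisible by $F$ and yields the decisive growth control on $\mf F$. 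Krein's factorization of bounded-type functions then gives $\mf F=A/B$ with $A,B$ entire in the respective Smirnov classes; the symmetry and boundary behaviour of $\mf F$ make $B=B^\#$ with only real zeros, and extremality of $\mu_0$ forces these zeros to be simple and $\mu_0$ to be the discrete measure carried by them --- which is K1 --- while $\int W\,d|\mu_0|<\infty$ is precisely K4. Conditions K2 and K3 follow by applying the bounded type and decay of $\mf F$ to $F/B$, $F\in\mc L$, here using Pitt's description of the majorant and the boundedness of non-real point evaluations on a non-dense $\mc L$. Finally, since K3 already gives $\mt(F/B)\leq0$ for every $F\in\mc L$, only the reverse estimate is at issue for \eqref{N27}, and the minimality of the extreme point $\mu_0$ --- equivalently, that $\mf F$ carries no surplus exponential decay --- produces $F_k\in\mc L$ with $\mt(F_k/B)\to0$.

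I expect the main obstacle to be this extreme-point analysis in $(i)\Rightarrow(ii)$: extracting from the abstract fact that $\mu_0$ is an extreme annihilator the concrete conclusion that $\mu_0$ is discrete and carried by the zero set of a function of Krein class, while simultaneously controlling bounded type in \emph{both} half-planes and the growth along $i\bb R^+$. This is where the bounded-type machinery (Smirnov classes, Krein's theorems, Cartwright-type estimates) and Pitt's results on regular function spaces are indispensable, and where one must make do with the division-of-zeros axiom B2 alone: the classical polynomial argument relies on the moment / difference-quotient identities, which are unavailable here, and their substitute is the observation that the Cauchy transform of $F\,d\mu_0$ vanishes at the zeros of $F$. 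Two further points requiring care are the possibly non-Hausdorff topology of $C_0(W)$ (which affects the identification of $C_0(W)^*$ and the support of the extreme measure) and pinning down the exact equality in \eqref{N27} rather than merely the inequality $\mt(F/B)\leq 0$.
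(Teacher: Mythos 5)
Your sufficiency direction contains a genuine gap. For a general $B\in\mc K(\mc L,W)$ the measure $\mu=\sum_{x:\,B(x)=0}\frac{1}{B'(x)}\delta_x$ need \emph{not} annihilate $\mc L$: even granting the bounded-type argument, what (K2)--(K4) deliver is only the reproducing formula $\frac{F(z)}{B(z)}=\sum_{x:\,B(x)=0}\frac{F(x)}{B'(x)}\frac{1}{z-x}$, and by dominated convergence the residue sum equals $\lim_{y\to\infty} iy\,\frac{F(iy)}{B(iy)}$; condition (K3) gives $\frac{F(iy)}{B(iy)}=o(1)$, not $o(1/y)$, so nothing forces this limit to vanish. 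A concrete counterexample: take $\mc L=\bb C$ (the constants), $W(x)=1+x^2$, $B(z)=z$. Then (K1)--(K4) all hold, yet $\sum_{x:\,B(x)=0}\frac{F(x)}{B'(x)}=F(0)\neq 0$ for $F\equiv 1$, so $\delta_0$ does not annihilate $\mc L$. This is precisely why the paper first upgrades $B$ via \thref{N10}: multiplying by a real rational factor with strictly more new real zeros than removed ones produces $\tilde B\in\mc K(\mc L,W)$ satisfying the strengthened conditions \eqref{N7} and \eqref{N8}, and only then does the Lagrange formula \eqref{N9} of \thref{N6}, applied to $zF(z)$ and evaluated at a point where $\tilde B\neq 0$, yield the annihilation. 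Your argument needs this extra step; as stated it proves a false intermediate claim.

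For the necessity direction and for \eqref{N27}, your outline names the right ingredients (Krein--Milman on real annihilators, Cauchy transforms and bounded type, Pitt's results), but the decisive steps are asserted rather than argued: that the extreme measure is discrete and carried by the zero set of a single function satisfying (K1), that (K2)--(K3) hold, and above all the exact equality \eqref{N27}. In the paper these come from a different mechanism: the weighted de~Branges lemma (\thref{N18}) produces a measure $\mu$ for which the annihilator of $\mc L$ in $L^\infty(\mu)$ is \emph{one-dimensional}, spanned by a unimodular $g_0$, and it is this codimension-one property---not extremality of a Cauchy transform---that yields injectivity of restriction, discreteness of $\mu$, and the explicit construction $B(z)=(z-t_0)(z-t)H_t(z)$ with $H_t=\tilde\iota h_t$ built from Pitt's extension operator; (K2) and (K3) then follow from \thref{N19} and the expansion in \thref{N17}. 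Moreover, \eqref{N27} is not a formal consequence of ``minimality of the extreme point'': the paper needs a separate section, using injectivity of $\rho$ on the larger space $\mc K$ (\thref{N28}, which requires more of Pitt's machinery), closedness of the spaces $\mc K_{(\tau_+,\tau_-)}$ under the isometries $M_\alpha:F\mapsto e^{i\alpha z}F$, and the bound $\dim(\mc K/\tilde{\mc L})\le 1$, to exclude a strictly negative supremum. So, beyond the error identified above, the proposal is a plan whose hardest components (discreteness/simplicity from the dual structure, and the mean-type equality) remain to be supplied.
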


\section{The toolbox}

Following de~Branges' original proof requires several (crucial!) tools: 
\\[1mm]
(1) A description of the topological dual space of $C_0(W)$. 
Knowledge about duals of weighted $C_0$-spaces (actually, in a much more general 
setting than the present) was obtained already in the 1960's by W.H. Summers 
following the work of L. Nachbin, cf.\ \cite{summers:1969}, \cite{summers:1970}, 
\cite{nachbin:1965}. 
\\[1mm]
(2) A weighted version of 
de~Branges' lemma on extremal points $\mu$ of the annihilator of $\mc L$. The original version 
(without any weights) is \cite[Lemma]{debranges:1959a}\footnote{See also 
\cite[Chapter VI.F]{koosis:1988}, where a particular case is elaborated.}. 
\\[1mm]
(3) Some of L.\,Pitt's theorems, applied with the Banach space $L^1(\mu)$. These can be 
extracted from \cite{pitt:1983}.
\\[1mm]
\noindent
We start with the description of bounded linear functionals. 
Let $X$ be a locally compact Hausdorff space, and let $W:X\to(0,\infty]$ be a lower semicontinuous
function. Set $\Omega:=\{x\in X:\,W(x)\neq\infty\}$ and 
\begin{equation}\label{N22}
	V(x):=
	\begin{cases}
		\frac 1{W(x)}, &\hspace*{-3mm}\quad x\in\Omega,
		\\
		0, &\hspace*{-3mm}\quad x\in X\setminus\Omega,

	\end{cases}
	\,,
\end{equation}
and denote by $\bb M_b(X)$ the space of all complex (bounded) Borel measures on $X$ endowed 
with the norm $\|\mu\|:=|\mu|(X)$, where $|\mu|$ denotes the total variation of the complex 
measure $\mu$. 

Consider the map $T$ which assigns to each measure $\mu\in\bb M_b(X)$ the linear 
functional $T\mu$ defined as 
\begin{equation}\label{N21}
	(T\mu)f:=\int_X fV\,d\mu,\quad f\in C_0(W)
	\,.
\end{equation}
Obviously, $T$ is well-defined and maps $\bb M_b(X)$ into $C_0(W)'$, in fact 
\[
	\|T\mu\|\leq\|\mu\|,\quad \mu\in\bb M_b(X)
	\,.
\]
The following statement is an immediate consequence of \cite[Theorems 3.1 and 4.5]{summers:1970},
just using in addition some standard approximation arguments (like Lusin's Theorem, cf.\
\cite[2.24]{rudin:1987}); we will not go into the details. 

\begin{theorem}[Summers]\thlab{N20}
	Let $X\neq\emptyset$ be a locally compact Hausdorff space, let $W:X\to(0,\infty]$ be
	lower semicontinuous, and assume that $\Omega:=\{x\in X:\,W(x)\neq\infty\}$ is dense 
	in $X$. Then the map $T$ defined by \eqref{N21} maps $\bb M_b(X)$ surjectively onto 
	$C_0(W)'$. Moreover, for each $\mu\in\bb M_b(X)$, the following hold\footnote{We write 
	$h\mu$ for the measure which is absolutely continuous with respect to $\mu$ and has Radon--Nikodym 
	derivative $h$. Moreover, $\mathds{1}_Y$ denotes the characteristic function of the set $Y$.}.
	\begin{enumerate}[$(i)$]
	\item We have $T\mu=T(\mathds{1}_\Omega\mu)$ and $\|T\mu\|=\|\mathds{1}_\Omega\mu\|$.
	\item The functional $T\mu$ is real $($i.e.\ $\forall\,f\in C_0(X),f\geq 0:\ 
		(T\mu)f\in\bb R$$)$, if and only if $\mathds{1}_\Omega\mu$ is a real-valued 
		measure. 
	\end{enumerate}
\end{theorem}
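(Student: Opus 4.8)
The plan is to reduce the whole statement to two facts---the reverse norm inequality $\|T\mu\|\ge\|\mathds{1}_\Omega\mu\|$ and surjectivity of $T$---and to establish both of these by a Lusin-type construction of suitable test functions, used once in a ``primal'' and once in a ``dual'' form. First the easy observations. Since $V$ vanishes identically on $X\setminus\Omega$, the defining formula \eqref{N21} gives at once $T\mu=T(\mathds{1}_\Omega\mu)$, and from $|f(x)|V(x)\le\|f\|_{C_0(W)}$ together with the triangle inequality for integrals one gets $\|T\mu\|\le\|\mathds{1}_\Omega\mu\|$. The ``if'' part of $(ii)$ is immediate from \eqref{N21}. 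Finally, if we already knew $\|T\mu\|\ge\|\mathds{1}_\Omega\mu\|$, then $T$ would be isometric --- hence injective --- on measures concentrated on $\Omega$; writing $\mathds{1}_\Omega\mu=\nu_1+i\nu_2$ with real measures $\nu_1,\nu_2$ and observing that realness of $T(\mathds{1}_\Omega\mu)=T\mu$ forces $T\nu_2=0$, the ``only if'' part of $(ii)$ follows. Thus everything comes down to \emph{(a)} $\|T\mu\|\ge\|\mathds{1}_\Omega\mu\|$ and \emph{(b)} $T$ maps $\bb M_b(X)$ onto $C_0(W)'$.

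For \emph{(a)} I would fix $\mu$, put $\nu:=\mathds{1}_\Omega\mu$ with polar decomposition $\nu=\sigma|\nu|$, $|\sigma|\equiv 1$ on $\Omega$, and work with a large compact set. Given $\varepsilon>0$, inner regularity of $|\nu|$ (which is concentrated on $\Omega$) furnishes a compact $K\subseteq\Omega$ with $|\nu|(X\setminus K)<\varepsilon$; here $W$ is bounded, because $\{W>n\}$ is open and $\bigcap_n(\{W>n\}\cap K)=\emptyset$. Now two approximations are combined. On the one hand, lower semicontinuity and positivity of $W$ give $W(x)=\sup\{g(x):g\in C_c(X),\,0\le g\le W\}$ for every $x$, the family on the right being directed by pointwise maxima; an order-continuity argument on the finite measure space $(K,|\nu|)$ then produces such a $g$ with $\int_K(1-g/W)\,d|\nu|<\varepsilon$. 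On the other hand, Lusin's theorem followed by Tietze extension (and the radial retraction onto the closed unit disc) produces $h\in C_c(X)$ with $|h|\le 1$ which coincides with $\qu\sigma$ on $K$ outside a set of $|\nu|$-measure $<\varepsilon$. Then $f:=gh\in C_c(X)\subseteq C_0(W)$ satisfies $|f|\le g\le W$, hence $\|f\|_{C_0(W)}\le 1$, while splitting $\int fV\,d\nu$ over $K$ and $X\setminus K$ gives $(T\mu)f\ge\|\nu\|-O(\varepsilon)$; letting $\varepsilon\to 0$ yields \emph{(a)}.

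For \emph{(b)}, let $\phi\in C_0(W)'$. One checks that $C_c(X)$ is dense in $C_0(W)$ and that on each $C_c(L)$ (with $L$ compact) $\|\cdot\|_{C_0(W)}$ is dominated by a multiple of the sup-norm, since $V$ is upper semicontinuous and therefore bounded on $L$; hence $\phi|_{C_c(X)}$ is continuous for the inductive-limit topology and, by the Riesz representation theorem, $\phi(f)=\int_X f\,d\rho$ for $f\in C_c(X)$ and a complex Radon measure $\rho$. Running the construction of \emph{(a)} in dual form --- testing $\phi$ against functions $f=gh$ as above, now with $g$ approximating $W$ from below in $L^1(\mathds{1}_L|\rho|)$ and $h$ approximating the phase of $\rho$ --- shows $\int_X W\,d|\rho|\le\|\phi\|$; in particular $W<\infty$ holds $|\rho|$-almost everywhere, so $\rho$ is concentrated on $\Omega$ and $\mu:=W\rho$ is a bounded complex Borel measure. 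For $f\in C_c(X)$ one then has $(T\mu)f=\int_X fVW\,d\rho=\int_X f\,d\rho=\phi(f)$, and density of $C_c(X)$ gives $T\mu=\phi$. Together with the reduction above this proves the theorem.

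The genuine obstacle, both in \emph{(a)} and in \emph{(b)}, is that $V=1/W$ is only upper semicontinuous: $C_0(W)$ is therefore \emph{not} isometric to a classical space $C_0(Y)$, and neither the representing measure nor the extremal test functions can be read off directly --- one must build the test functions by hand, combining Lusin's theorem, Tietze extension, and the approximation of the lower semicontinuous weight $W$ from below by functions in $C_c(X)$, the last point requiring control of the order-limit of a directed (not merely countable) family against a finite measure. An alternative is to deduce the theorem from Summers' structure results for the duals of Nachbin spaces $CV_0(X)$, specialised to the single weight $\{1/W\}$; the estimates sketched above are precisely the ``standard approximation arguments'' that bridge the gap left by the lower semicontinuity of $W$ and the possibility $\Omega\neq X$.
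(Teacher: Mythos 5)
You prove the statement directly, which is a genuinely different route from the paper: the paper offers no proof at all, declaring the theorem to be an immediate consequence of \cite[Theorems 3.1 and 4.5]{summers:1970} together with standard approximation arguments such as Lusin's theorem \cite[2.24]{rudin:1987} --- precisely the alternative you mention at the end. Your reduction to the two facts $\|T\mu\|\ge\|\mathds{1}_\Omega\mu\|$ and surjectivity is sound (the trivial inequality, $T\mu=T(\mathds{1}_\Omega\mu)$, and both directions of (ii) follow as you say, using density of $C_{00}(X)$ in $C_0(W)$), and the extremal test functions $f=gh$, with $g$ continuous of compact support, $0\le g\le W$, supplied by lower semicontinuity of $W$, and $h$ a Lusin--Tietze approximation of the conjugate phase, do give the lower norm bound; likewise the Riesz representation of $\phi|_{C_{00}(X)}$ (legitimate because $V$ is upper semicontinuous, hence bounded on compacts) followed by the reweighting $\mu:=W\rho$ is the right mechanism for surjectivity. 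What your route buys is a self-contained argument in which one sees exactly where lower semicontinuity of $W$ and density of $\Omega$ enter; what the paper's citation buys is brevity and Summers' greater generality (duals of Nachbin spaces with systems of weights).

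Two points need tightening. First, in the surjectivity step choose $h$ to agree with the conjugate phase of $\rho$ $|\rho|$-a.e.\ on all of $\supp g$, not only on a fixed compact $L$: since $|\rho|$ is a priori only locally finite, the term $\int_{\supp g\setminus L}gh\,d\rho$ is bounded but not small, and the localized estimate as written yields only $\int_L g\,d|\rho|\le\|\phi\|+\|g\|_\infty\,|\rho|(\supp g\setminus L)$, which does not give $\int_L W\,d|\rho|\le\|\phi\|$. The clean claim is $\int_X g\,d|\rho|\le\|\phi\|$ for every continuous compactly supported $g$ with $0\le g\le W$; the directed supremum over such $g$ then yields $\int_X W\,d|\rho|\le\|\phi\|$, after which your definition $\mu:=W\rho$ and density of $C_{00}(X)$ finish the argument unchanged. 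Second, both your inner-regularity step (a compact $K\subseteq\Omega$ with $|\nu|(X\setminus K)<\varepsilon$) and the directed-supremum (``order-continuity'') step use that the measures involved are Radon; for general bounded Borel measures on a general locally compact Hausdorff space the identity $\|T\mu\|=\|\mathds{1}_\Omega\mu\|$ can in fact fail (Dieudonn\'e-type measures), so the regularity should be stated explicitly --- it is automatic in the paper's application, where $X$ is a closed subset of $\bb R$.
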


\noindent
Next, we show the required weighted version of de~Branges' lemma. This is done in essence 
by repeating the proof given in \cite{debranges:1959a}; for completeness, we provide the details. 

\begin{lemma}[de~Branges]\thlab{N18}
	Let $W$ be a weight, and let $\mc L$ be a linear subspace 
	of $C_0(W)$ which is invariant with respect to complex conjugation. 
	Assume that $\qu{\mc L}\neq C_0(W)$. 
	Then there exists a positive Borel measure on $\bb R$, $\mu\neq 0$, such that the
	following hold.
	\begin{enumerate}[$(i)$]
	\item $\displaystyle \int_{\bb R}W\,d\mu<\infty$. 
	\item The annihilator of the space $\mc L=\{f:\,f\in\mc L\}\subseteq L^1(\mu)$ with 
		respect to the duality between $L^1(\mu)$ and $L^\infty(\mu)$ 
		is one-dimensional. It is spanned by a function $g_0\in L^\infty(\mu)$ with 
		$|g_0(x)|=1$, $\mu$-a.a.\ $x\in\bb R$.
	\end{enumerate}
\end{lemma}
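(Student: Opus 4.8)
\emph{Proof plan.}
The plan is to follow de~Branges' original argument, using \thref{N20} to identify the dual of $C_0(W)$. First I would reduce to the Hausdorff case: since $\|f\|_{C_0(W)}=\sup_{x\in\Omega}|f(x)|/W(x)$ depends on $f$ only through $f|_\Omega$, restriction to the closed (hence locally compact Hausdorff) set $X:=\qu\Omega\subseteq\bb R$ is an isometry of $C_0(W)$ onto $C_0(W|_X)$ --- surjective because a continuous function on $X$ extends to $\bb R$ by Tietze and the decay condition is automatic off $\Omega$, where $W\equiv\infty$ --- carrying $\mc L$ to a conjugation-invariant, non-dense subspace; a measure found on $X$ is then extended by $0$ to $\bb R$. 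So I may assume $\Omega$ is dense, and \thref{N20} applies with $X=\bb R$. Next, by Hahn--Banach pick $\Lambda\in C_0(W)'\setminus\{0\}$ with $\Lambda|_{\mc L}=0$; since $\mc L=\qu{\mc L}$, the functional $\Lambda^{\#}\colon f\mapsto\qu{\Lambda(\qu f)}$ also annihilates $\mc L$, so one of $\tfrac12(\Lambda+\Lambda^{\#})$, $\tfrac1{2i}(\Lambda-\Lambda^{\#})$ is a nonzero element of the set $A:=\{\Psi\in C_0(W)'\colon\|\Psi\|\le1,\ \Psi|_{\mc L}=0,\ \Psi^{\#}=\Psi\}$. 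This $A$ is convex and weak-$*$ compact (Banach--Alaoglu, plus weak-$*$ closedness of the three conditions), and strictly larger than $\{0\}$, so Krein--Milman gives an extreme point $\Lambda_0\ne0$ of $A$, necessarily with $\|\Lambda_0\|=1$. Writing $\Lambda_0=T\mu_0$ as in \thref{N20} with $\mu_0=\mathds1_\Omega\mu_0$ real and $\|\mu_0\|=1$, I also record that $\|T\mu\|=\|\mathds1_\Omega\mu\|$ makes $T$ \emph{injective} on measures carried by $\Omega$.

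Then I would take $\mu:=\tfrac1W\,|\mu_0|$, a positive Borel measure carried by $\Omega$, and $g_0:=d\mu_0/d|\mu_0|\in\{-1,+1\}$, so that $\mu_0=g_0W\mu$ and, recalling $V=1/W$ on $\Omega$ from \eqref{N22}, $\Lambda_0(f)=\int fV\,d\mu_0=\int fg_0\,d\mu$. Then $\mu\ne0$, and $\int_{\bb R}W\,d\mu=|\mu_0|(\Omega)=1<\infty$, which is $(i)$; from $|f(x)|\le\|f\|_{C_0(W)}W(x)$ on $\Omega$ one gets $\mc L\subseteq L^1(\mu)$; and $\int fg_0\,d\mu=\Lambda_0(f)=0$ for $f\in\mc L$, so the $\mu$-a.e.\ unimodular function $g_0$ lies in the annihilator of $\mc L$ in $L^\infty(\mu)$.

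The substance of the proof is that this annihilator is no larger than $\bb Cg_0$, and here I would use the extremality of $\Lambda_0$. Let $h\in L^\infty(\mu)$ satisfy $\int fh\,d\mu=0$ for all $f\in\mc L$. Conjugation-invariance of $\mc L$ forces $\qu h$, hence $\Re h$ and $\Im h$, to annihilate $\mc L$ as well, so I may assume $h$ real. Since $g_0$ annihilates, $h':=h-\lambda g_0$ annihilates for every real $\lambda$; choosing $\lambda:=\int Whg_0\,d\mu$ (finite, and legitimate via $\int W\,d\mu=1$ and $g_0^2=1$) arranges $\int Wh'g_0\,d\mu=0$. If $h'=0$ $\mu$-a.e.\ then $h=\lambda g_0$ and we are done; otherwise rescale so that $\|h'\|_\infty\le1$, and put $\nu_\pm:=(g_0\pm h')W\mu$. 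These are finite real Borel measures carried by $\Omega$; since $|g_0\pm h'|=1\pm h'g_0\in[0,2]$ we get $\|T\nu_\pm\|=|\nu_\pm|(\bb R)=\int(1\pm h'g_0)W\,d\mu=1\pm\int Wh'g_0\,d\mu=1$, and $T\nu_\pm(f)=\int f(g_0\pm h')\,d\mu=\Lambda_0(f)\pm\int fh'\,d\mu=0$ on $\mc L$, while $(T\nu_\pm)^{\#}=T\nu_\pm$ because $\nu_\pm$ is real; hence $T\nu_\pm\in A$. But $\tfrac12(T\nu_++T\nu_-)=T(g_0W\mu)=\Lambda_0$, so extremality of $\Lambda_0$ forces $T\nu_+=\Lambda_0=T\mu_0$, and injectivity of $T$ on measures carried by $\Omega$ yields $(g_0+h')W\mu=g_0W\mu$, i.e.\ $h'W\mu=0$, i.e.\ (as $W>0$ on $\Omega$) $h'=0$ $\mu$-a.e.; a contradiction. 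Hence $h\in\bb Cg_0$, so the annihilator of $\mc L$ in $L^\infty(\mu)$ is exactly the one-dimensional space $\bb Cg_0$, proving $(ii)$.

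The routine parts are the reduction to the Hausdorff case and the Hahn--Banach/Krein--Milman step. The main obstacle is the last paragraph --- the extreme-point argument --- and specifically the two normalizations that make it bite: building $\mu$ from $\tfrac1W|\mu_0|$ rather than from $|\mu_0|$ (this is what makes $\int W\,d\mu$ finite, hence both $(i)$ and $\mc L\subseteq L^1(\mu)$ true), and first subtracting the correct multiple of $g_0$ from $h$, for otherwise the perturbations $\nu_\pm$ need not stay in the unit ball of $A$. Getting this bookkeeping right in the presence of an unbounded weight and a possibly non-Hausdorff topology is where I expect all the genuine care to be required.
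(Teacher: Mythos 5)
Your proof is correct, and its skeleton is the one the paper follows: Summers' description of the dual (\thref{N20}) after passing to $\qu\Omega$, Hahn--Banach plus conjugation invariance to produce a nonzero real annihilating functional, Krein--Milman applied to the real annihilators in the unit ball, and the extreme point $\Lambda_0=T\mu_0$ giving exactly the measure with $d\mu=V\,d|\mu_0|$ (the paper's $\tilde\sigma_0$) and $g_0=d\mu_0/d|\mu_0|$. Where you genuinely differ is in how extremality is exploited. The paper transfers the question to the auxiliary convex set $\mc M_{\sigma_0}$ of nonnegative functions $g$ in the twisted annihilator with $\int g\,d|\sigma_0|=\|\sigma_0\|$, and shows that a second real, nonconstant annihilator forces the constant $1$ to be a nontrivial convex combination $1=th+(1-t)\tilde h$; injectivity of the map $\Gamma_{\sigma_0}$ then contradicts extremality of $T\sigma_0$ in $\Sigma$. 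You instead perturb the representing measure symmetrically: after subtracting from $h$ its component along $g_0$ in the $W\,d\mu$ pairing and normalizing $\|h'\|_\infty\le 1$, the real measures $\nu_\pm=(g_0\pm h')W\mu$ satisfy $\|T\nu_\pm\|=1$ exactly (your normalization $\int Wh'g_0\,d\mu=0$ plays the role of the paper's constraint $\int g\,d|\sigma_0|=\|\sigma_0\|$), they average to $\Lambda_0$, and injectivity of $T$ on measures carried by $\Omega$ (immediate from \thref{N20}$(i)$) yields $h'=0$. The two mechanisms are equivalent in substance; yours dispenses with $\mc M_\sigma$ and $\Gamma_\sigma$ at the price of the pointwise identity $|g_0\pm h'|=1\pm g_0h'$, which uses that $g_0$ is real and unimodular. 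Two small repairs: after restricting to $X:=\qu\Omega$ you must apply \thref{N20} with that $X$, not with $X=\bb R$ as written; and one should record, as the paper does, that $V$ is upper semicontinuous and hence bounded on compact sets, so that $d\mu=V\,d|\mu_0|$ is indeed a Borel measure in the paper's sense (finite on compacts). Both are routine and do not affect the argument.
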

\begin{proof}
	For each $\sigma\in\bb M_b(\qu\Omega)$ we define a positive Borel measure $\tilde\sigma$
	on $\bb R$ by (the function $V$ is again defined by \eqref{N22}) 
	\[
		\tilde\sigma(E):=\int_{E\cap\Omega}V\,d|\sigma|,\quad  E\text{ is a Borel set}\,.
	\]
	Note that $V$ is upper semicontinuous, and hence bounded on every compact subset of
	$\qu\Omega$. Thus $\tilde\sigma(E)<\infty$ whenever $E\subseteq\bb R$ is compact, and
	hence $\tilde\sigma$ indeed is a positive Borel measure on the real 
	line\footnote{We always include the requirement that the measure of compact sets is 
	finite into the notion of a Borel measure.}. Clearly, we have 
	$\tilde\sigma(\bb R\setminus\Omega)=0$ and $\int_{\bb R}W\,d\tilde\sigma<\infty$, in
	particular thus $C_0(W)\subseteq L^1(\tilde\sigma)$. 

	\hspace*{0pt}\\[-2mm]\textit{Step 1:} 
	Let $\sigma\in\bb M_b(\qu\Omega)$ with $T\sigma\neq 0$ be fixed. For each measurable and
	bounded function $g:\bb R\to\bb C$, we set ($T$ is defined as in \eqref{N21} using the
	weight $W|_{\qu\Omega}$) 
	\[
		\Gamma_\sigma g:=T(g \sigma)
		\,.
	\]
	Then 
	\begin{equation}\label{N23}
		(\Gamma_\sigma g)f=\int_{\qu\Omega}fV\cdot g\,d\sigma=
		\int_{\qu\Omega}f\cdot g\frac{d\sigma}{d|\sigma|}\,d\tilde\sigma,\quad 
		f\in C_0(W|_{\qu\Omega})
		\,.
	\end{equation}
	Denote by $C_{00}(\qu\Omega)$ the space of all continuous functions on $\qu\Omega$ which have compact support. 
	Then $C_{00}(\qu\Omega)\subseteq C_0(W|_{\qu\Omega})$, and hence \eqref{N23} implies that
	$\Gamma_\sigma g=0$ if and only if $g(x)=0$  for $\tilde\sigma$-a.a.\ $x\in\qu\Omega$.
	We conclude that $\Gamma_\sigma$ induces a well-defined and injective linear operator
	(again denoted as $\Gamma_\sigma$) 
	\[
		\Gamma_\sigma:L^\infty(\tilde\sigma)\to C_0(W|_{\qu\Omega})'
		\,.
	\]
	Using the properties of $T$, we see that 
	\begin{enumerate}[$(a)$]
	\item $\|\Gamma_\sigma\|\leq\|\sigma\|$;
	\item if $\sigma$ is real-valued, then $\Gamma_\sigma$ maps real-valued functions to real
		functionals. 
	\end{enumerate}
	Again invoking \eqref{N23} we see that (Caution! The two annihilators are understood with
	respect to different dualities) 
	\begin{enumerate}[$(a)$]
	\setcounter{enumi}{2}
	\item $\Gamma_\sigma g\in\mc L^\perp$ ($\subseteq C_0(W|_{\qu\Omega})'$) if and only if
		$g\in\big(\frac{d\sigma}{d|\sigma|}\big)^{-1}\mc L^\perp$ ($\subseteq
		L^\infty(\tilde\sigma)$).
	\end{enumerate}

	\hspace*{0pt}\\[-2mm]\textit{Step 2:}
	Consider the set 
	\[
		\Sigma:=\big\{\phi\in C_0(W|_{\qu\Omega})':\,\|\phi\|\leq 1,\ \phi\text{
		real}\big\}\cap \mc L^\perp
		\,.
	\]
	Clearly, $\Sigma$ is $w^*$-compact and convex. Since $\qu{\mc L}\neq C_0(W)$, also 
	$\qu{\mc L}\neq C_0(W|_{\qu\Omega})$. Hence, there exists $\phi\in C_0(W|_{\qu\Omega})'$,
	$\|\phi\|=1$, with $\phi(f|_{\qu\Omega})=0$, $f\in\mc L$. Let $\mu\in\bb M_b(\qu\Omega)$,
	$\|\mu\|=1$ be such that $T\mu=\phi$. Since $\mc L$ is invariant with respect to complex
	conjugation, the functional $T\qu\mu$ also annihilates $\mc L$. Since $T\mu\neq 0$, one
	of $T(\Re\mu)$ and $T(\Im\mu)$ must be nonzero. We conclude that $\Sigma$ contains a
	nonzero element. 

	Let $\sigma\in\bb M_b(\qu\Omega)$ with $|\sigma|(\qu\Omega\setminus\Omega)=0$ and 
	$T\sigma\in\Sigma\setminus\{0\}$, and set 
	\[
		\mc M_\sigma:=\Big\{g\in\Big(\frac{d\sigma}{d|\sigma|}\Big)^{-1}\mc L^\perp:\,
		g\geq 0,\ \int_{\qu\Omega}g\,d|\sigma|=\|\sigma\|\Big\}
		\,.
	\]
	Due to the properties $(a)$--$(c)$ from above, we see that 
	\begin{enumerate}[$(A)$]
	\item $1\in\mc M_\sigma$;
	\item $\Gamma_\sigma(\mc M_\sigma)\subseteq\Sigma$.
	\end{enumerate}
	\begin{quote}
	\textit{Claim: If $\dim\mc L^\perp>1$ ($\mc L^\perp\subseteq L^\infty(\tilde\sigma)$),
	then $1$ is not an extremal point of $\mc M_\sigma$.}
	\end{quote}
	Once this claim is established, the assertion of the lemma follows immediately: By the
	Krein-Milman theorem, the set $\Sigma$ must contain a nonzero extremal point $\phi_0$.
	Let $\sigma_0\in\bb M_b(\qu\Omega)$ be such that 
        $|\sigma_0|(\qu\Omega\setminus\Omega)=0$
	and $T\sigma_0=\phi_0$. Then the function $1$ must be
	an extremal point of $\mc M_{\sigma_0}$ (otherwise, by the property (B)
        and linearity of $\Gamma_{\sigma_0}$, the function $\phi_0=T\sigma_0=
        \Gamma_{\sigma_0}1$ will not be an extremal point of $\Sigma$). 
        Hence, the measure $\mu:=\tilde\sigma_0$ has all
	properties required in the assertion of the lemma. 

	\hspace*{0pt}\\[-2mm]\textit{Step 3; Proving the claim:} 
	The measure $\sigma$ is real-valued. This implies that 
	$\big(\frac{d\sigma}{d|\sigma|}\big)^{-1}\mc L^\perp$ is invariant under complex
	conjugation, and hence that it equals the linear span of its real-valued elements. If 
	$\dim\mc L^\perp>1$, therefore, there must exist a real-valued element 
	$g\in\big(\frac{d\sigma}{d|\sigma|}\big)^{-1}\mc L^\perp$ which is not equal to a
	constant $\tilde\sigma$-a.e. Set 
	\[
		h:=\big(g+2\|g\|_\infty\big)\cdot
		\Big(\int_{\qu\Omega}|g+2\|g\|_\infty|\,d|\sigma|\Big)^{-1}\cdot\|\sigma\|
		\,.
	\]
	Then $h\in\mc M_\sigma$, and is not equal to a constant $\tilde\sigma$-a.e. 

	Let us show that $\|h\|_\infty>1$. We argue by contradiction. If we had
	$\|h\|_\infty\leq 1$, then $1-h\geq 0$ $\tilde\sigma$-a.e.\ (and hence also
	$|\sigma|$-a.e.). This implies 
	\[
		\int_{\qu\Omega}|1-h|\,d|\sigma|=\int_{\qu\Omega}(1-h)\,d|\sigma|=
		\|\sigma\|-\int_{\qu\Omega}h\,d|\sigma|=0
		\,,
	\]
	and hence $h=1$ $|\sigma|$-a.e., a contradiction. 

	Set $t:=\frac 1{\|h\|_\infty}$, then $t\in(0,1)$. Consider the function 
	\[
		\tilde h:=\frac{1-th}{1-t}
		\,.
	\]
	Clearly, $\tilde h\in\big(\frac{d\sigma}{d|\sigma|}\big)^{-1}\mc L^\perp$. Since 
	$t<1$, we have $\tilde h\geq 0$. Moreover, 
	\[
		\int_{\qu\Omega}\tilde h\,d|\sigma|=\frac 1{1-t}\Big(\|\sigma\|-
		t\int_{\qu\Omega}h\,d|\sigma|\Big)=\|\sigma\|
		\,.
	\]
	Together, we see that $\tilde h\in\mc M_\sigma$. Writing 
	\[
		1=t\cdot h+(1-t)\cdot\frac{1-th}{1-t}
	\]
	shows that $1$ is not an extremal point of $\mc M_\sigma$. 
\end{proof}

\noindent
Finally, let us provide the required facts from \cite{pitt:1983}. 
Again for completeness, we show how they are extracted from this paper. 

\begin{theorem}[Pitt]\thlab{N19}
	Let $\mu$ be a positive Borel measure on the real line. Let $\mc L$ be an algebraic 
	de~Branges space with 
	\begin{enumerate}[$(i)$]
		\item $\displaystyle \int_{\bb R}|F|\,d\mu<\infty,\quad F\in\mc L$;
		\item If $F$ satisfies $\int_{\bb R}|F|\,d\mu=0$, then $F=0$.
	\end{enumerate}
	Assume that $\mc L$ is not dense in $L^1(\mu)$. 

	Then the function $\mf m:\bb C\to[0,\infty]$ defined as 
	\[
		\mf m(z):=\sup\big\{|F(z)|:\,F\in\mc L,\|F\|_{L^1(\mu)}\leq 1\big\}
	\]
	is everywhere finite and continuous. Each element $f\in\Clos_{L^1(\mu)}\mc L$ equals 
	$\mu$-a.e.\ the restriction of an entire function $F$ with 
	\begin{equation}\label{N11}
		|F(z)|\leq \mf m(z) \|f\|,\qquad z\in\bb C
		\,.
	\end{equation}
	For each two functions $f,g\in\Clos_{L^1(\mu)}\mc L$, and entire functions $F,G$ with
	$F|_{\bb R}=f$, $G|_{\bb R}=g$\ $\mu$-a.e., which are subject to \eqref{N11}, the
	quotient $\frac FG$ is a meromorphic function of bounded type in both half--planes 
	$\bb C^+$ and $\bb C^-$. 
\end{theorem}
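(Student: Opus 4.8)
\textbf{Plan of proof for \thref{N19}.}
The plan is to extract all four assertions from results in Pitt's paper \cite{pitt:1983}, after casting the present hypotheses into his framework. First I would observe that $L^1(\mu)$ is a regular function space in Pitt's sense and that hypotheses $(i)$ and $(ii)$ say exactly that $\mc L$ is contained injectively in $L^1(\mu)$; together with axioms B1--B3 this places $(\mc L, L^1(\mu))$ among the pairs Pitt treats. The non-density hypothesis is then the standing assumption under which Pitt's structural theorems apply. The finiteness and continuity of $\mf m$ on all of $\bb C$ is Pitt's basic dichotomy: either $\mc L$ is dense, or the point-evaluation functionals $F\mapsto F(z)$ are uniformly bounded on the unit ball of $\mc L\subseteq L^1(\mu)$ locally uniformly in $z$, and the majorant they define is continuous. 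The key analytic input here is the usual normal-families argument: a locally bounded family of entire functions is locally equicontinuous, so $\mf m$, being a sup of such, is continuous once it is locally bounded, and local boundedness is precisely what non-density buys via the Hahn--Banach/uniform boundedness mechanism in \cite{pitt:1983}.

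Next I would address the representation of elements of the closure. Each $f\in\Clos_{L^1(\mu)}\mc L$ is an $L^1(\mu)$-limit of a sequence $F_n\in\mc L$; normalizing, $\|F_n\|\le\|f\|+o(1)$, so by finiteness of $\mf m$ the $F_n$ form a locally bounded, hence (by Montel) normal, family of entire functions. Any locally uniform limit $F$ is entire and satisfies $|F(z)|\le\mf m(z)\|f\|$ pointwise, giving \eqref{N11}; one must check that $F|_{\bb R}=f$ $\mu$-a.e., which follows because $L^1(\mu)$-convergence yields a subsequence converging $\mu$-a.e., and the locally uniform limit agrees with that pointwise limit wherever the latter exists. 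Uniqueness of $F$ as an entire function representing $f$ $\mu$-a.e.\ needs injectivity hypothesis $(ii)$: two such entire representatives differ by an entire function vanishing $\mu$-a.e., hence lying in $\mc L$'s ambient space with zero norm — here one uses that difference quotients keep us inside a space on which $\mu$ separates points, so the difference is the zero function. This identification of $\Clos_{L^1(\mu)}\mc L$ with a space of entire functions dominated by $\mf m$ is again the content of the relevant theorem in \cite{pitt:1983}, and I would cite it, supplying the normal-families details for completeness.

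The bounded-type assertion for quotients $F/G$ is the deepest part and is where I expect the main obstacle. Pitt proves that the space of entire functions arising as representatives of $\Clos_{L^1(\mu)}\mc L$, equipped with the majorant bound \eqref{N11}, behaves like a de~Branges-type space: in particular any two of its elements have a ratio of bounded type in $\bb C^+$ and in $\bb C^-$. The mechanism is that from the integrability $\int|F|\,d\mu<\infty$ and the growth control $|F(z)|\le\mf m(z)\|f\|$ one derives, via a Cauchy-type or Poisson-type integral representation built from $\mu$ and the continuous majorant $\mf m$, that $\log|F(x+iy)|$ has the right one-sided growth and that $F$ omits enough to be of bounded type relative to $G$; equivalently, $F$ and $G$ lie in a common Nevanlinna-type class in each half-plane and division of zeros (axiom B2, transported to the closure) does not destroy this. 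The obstacle is that this requires carefully reproducing the part of \cite{pitt:1983} where the Krein-type factorization is established — controlling the Green's/Poisson potential of $\log\mf m$ against $\mu$ — rather than a soft normal-families argument; I would isolate exactly the statement in Pitt that yields ``ratios are of bounded type in $\bb C^\pm$'' and show our hypotheses are a special case, keeping the reproduction of his argument to the minimum needed for self-containedness.
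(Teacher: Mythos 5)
Your proposal follows essentially the same route as the paper: the paper's proof of \thref{N19} is precisely an extraction from \cite{pitt:1983} (finiteness of $\mf m$ off the real axis from Proposition~2.4 and Theorem~3.1 via the density dichotomy, finiteness and continuity everywhere plus the entire representatives subject to \eqref{N11} from Theorem~3.2, and the bounded-type statement for quotients from Proposition~3.4 together with Theorem~A.1), so your plan of casting the hypotheses into Pitt's framework and citing these results is exactly what is done.

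One concrete warning: your claimed \emph{uniqueness} of the entire representative is false in general and your argument for it does not work. An entire function vanishing $\mu$-a.e.\ need not vanish identically (think of a discrete measure $\mu$: any entire function vanishing on $\supp\mu$ gives a second representative), and hypothesis $(ii)$ only rules this out for members of $\mc L$, not for differences of arbitrary representatives satisfying \eqref{N11}. The paper is explicit about this point --- it notes in a footnote that the word ``unique'' in Pitt's Theorem~3.2 should be deleted --- and this is why, in the later Step~1 of the necessity proof, a specific continuous linear extension operator $\tilde\iota$ is constructed rather than appealing to a canonical extension, and why the injectivity statement for the larger space $\mc K$ (Lemma~\thref{N28}) requires a separate, nontrivial argument drawing on more of Pitt's machinery. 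Since \thref{N19} as stated does not assert uniqueness, this flaw does not invalidate your proof of the statement, but do not rely on that uniqueness downstream.
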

\begin{proof}
	Assume that there exists $z\in\bb C\setminus\bb R$ with $\mf m(z)=\infty$. By symmetry,
	also $\mf m(\qu z)=\infty$. We obtain from \cite[Proposition 2.4,Theorem 3.1]{pitt:1983}
	that $\mc L$ is dense in $L^1(\mu)$, a contradiction. Hence, $\mf m$ is finite on $\bb
	C\setminus\bb R$. By \cite[Theorem 3.2]{pitt:1983}, the function $\mf m$ is finite and
	continuous in the whole plane. The fact that each function $f\in\Clos_{L^1(\mu)}\mc L$
	can be ($\mu$-a.e.) extended to an entire function subject to \eqref{N11} is shown in the
	same theorem\footnote{In \cite[Theorem 3.2]{pitt:1983} it is claimed that this extension
	is unique. It seems that this is not true in general: the word `unique' at the end
	of the third line of this statement should be deleted.}. By 
	\cite[Proposition 3.4, Theorem A.1]{pitt:1983}, the quotient of each two such functions 
	is of bounded type in $\bb C^+$ and $\bb C^-$. 
\end{proof}

\section{Necessity of `$\bm{\protect\mathcal{K}(\mc L,W)\neq\emptyset}$'}

In this section we show the implication $(i)\Rightarrow(ii)$ in \thref{N3}. 
The proof is carried out in five steps. Throughout the discussion, we denote 
\begin{itemize}
	\item[--] by $\bb H(\bb C)$ the space of all entire functions endowed with the topology of locally uniform convergence;
	\item[--] by $\rho:\bb H(\bb C)\to C(\bb R)$ the restriction map $F\mapsto F|_{\bb R}$;
	\item[--] by $\chi_w:\bb H(\bb C)\to\bb C$ the point evaluation map $F\mapsto F(w)$.
\end{itemize}
Notice that the case that $\mc L=\{0\}$ in \thref{N3} is trivial:
we can choose for $B$ any function of the form $B(z)=z-x_0$ with $x_0\in\bb R$ such that
$W(x_0)\neq 0$. Hence, we may assume throughout that $\mc L\neq\{0\}$. 

In Steps 1 and 2, we do not use the assumption \thref{N3}, $(i)$. The arguments given in these steps work in general. 
From Step 3 on, the assumption of the theorem enters in the form of de~Branges lemma. 

%
\begin{center}
	\textit{\underline{Step 1; The bounded extension operator.}}
\end{center}
Let $\mu$ be a positive Borel measure on the real line, and let $\mc L$ be an algebraic de~Branges space 
which is injectively contained in $L^1(\mu)$ and is not dense in this space. 

By Pitt's theorem each element $f\in\qu{\rho\mc L}$ can be extended to an entire function. In this step we show, among other things, 
that one can achieve that this extension process is a linear and continuous map. 

Applying Pitt's theorem, we obtain that the function 
\[
	\mf m(z):=\sup\big\{|F(z)|:\,\|\rho F\|_{L^1(\mu)}\leq 1\big\},\quad z\in\bb C
	\,,
\]
is everywhere finite and continuous. In particular, it is locally bounded, and hence the map 
\[
	\iota:=(\rho|_{\mc L})^{-1}:\rho\mc L\subseteq L^1(\mu)\longrightarrow\mc L
\]
is continuous in the topology of $\bb H(\bb C)$. 
Denote by $\tilde\iota:\qu{\rho\mc L}\subseteq L^1(\mu)\to\bb H(\bb C)$ 
its extension by continuity. 

It is important to show that $\rho\circ\tilde\iota=\id_{\qu{\rho\mc L}}$. Note that the map
$\rho$ is in general not continuous; locally uniform convergence need not imply
$L^1$-convergence. Hence the stated equality does not follow at once, just by 'extension by
continuity'. 
Let $f\in\qu{\rho\mc L}$ be given. Choose a sequence $(F_n)_{n\in\bb N}$, 
$F_n\in\mc L$, with 
\begin{equation}\label{N12}
	\lim_{n\to\infty}\rho F_n=f\quad\text{in\ }L^1(\mu)
	\,,
\end{equation}
and extract a subsequence $(F_{n_k})_{k\in\bb N}$ such that 
\[
	\lim_{k\to\infty}(\rho F_{n_k})(x)=f(x),\quad x\in\bb R\ \text{$\mu$-a.e.}
\]
By continuity of $\tilde\iota$, the relation \eqref{N12} implies that $\lim_{n\to\infty}\tilde\iota\rho F_n=\tilde\iota f$ locally 
uniformly. In particular, 
\[
	\lim_{k\to\infty}\big(\rho\tilde\iota\rho F_{n_k}\big)(x)=(\rho\tilde\iota f)(x),\quad x\in\bb R
	\,.
\]
However, by the definition of $\tilde\iota$ we have $\rho\circ\tilde\iota|_{\rho\mc L}=
\id_{\rho\mc L}$, and hence 
\[
	\rho\tilde\iota\rho F_{n_k}=\rho F_{n_k}
	\,.
\]
We conclude that $f(x)=(\rho\tilde\iota f)(x)$, $x\in\bb R$\ $\mu$-a.e., and this means that $f=\rho\tilde\iota f$ in $L^1(\mu)$. 

Setting 
\[
	\tilde{\mc L}:=\tilde\iota(\qu{\rho\mc L}),\qquad 
	\tilde\phi_w:=\chi_w\circ\tilde\iota,\quad \phi_w:=\chi_w\circ\iota=\tilde\phi_w|_{\rho\mc L}
	\,,
\]
we can summarize in a diagram:
\[
\xymatrix@!R=3mm{
	\parbox{3mm}{\vspace*{-6mm}\hspace*{-19mm}
		\parbox{25mm}{\begin{multline*}\scriptstyle\{F\in\bb H(\bb C):\\[-3pt] \scriptstyle\rho F\in L^1(\mu)\}\end{multline*}}}
		\ar[rr]^\rho
		&& 
		{\scriptstyle L^1(\mu)}	
		\\
	\tilde{\mc L} \ar@<2pt>[rr]^{\rho|_{\tilde{\mc L}}} \ar@/_5pc/[dddr]_{\chi_w|_{\tilde{\mc L}}}
		\ar@{}[u]_{\begin{rotate}{90}\hspace*{-3pt}\raisebox{-2pt}{$\scriptstyle\subseteq$}\end{rotate}} 
		&& 
		\qu{\rho\mc L} \ar@<2pt>[ll]^{\tilde\iota} \ar@/^5pc/[dddl]^{\tilde\phi_w}
		\ar@{}[u]_{\begin{rotate}{90}\hspace*{-3pt}\raisebox{-2pt}{$\scriptstyle\subseteq$}\end{rotate}}
		\\
	\mc L \ar@<2pt>[rr]^{\rho|_{\mc L}} \ar[ddr]_{\chi_w|_{\mc L}}
		\ar@{}[u]_{\begin{rotate}{90}\hspace*{-3pt}\raisebox{-2pt}{$\scriptstyle\subseteq$}\end{rotate}} 
		&& 
		\rho\mc L \ar@<2pt>[ll]^{\iota} \ar[ddl]^{\phi_w}
		\ar@{}[u]_{\begin{rotate}{90}\hspace*{-3pt}\raisebox{-2pt}{$\scriptstyle\subseteq$}\end{rotate}}
		\\
	&& 
		\\
	& \bb C
}
\]
Notice that, since $\tilde{\mc L}=\ran\tilde\iota$, we also have
$\tilde\iota\circ\rho|_{\tilde{\mc L}}=\id_{\tilde{\mc L}}$. I.e., the map 
$\rho|_{\tilde{\mc L}}$ maps
$\tilde{\mc L}$ bijectively onto $\qu{\rho\mc L}$ and its inverse equals $\tilde\iota$. 
Moreover, the already noted fact that $\rho$ is in general not continuous reflects in the fact
that $\tilde{\mc L}$ is in general not closed in $\bb H(\bb C)$. For example, in the case that 
$\mc L=\bb C[z]$, the closure of $\mc L$ is all of $\bb H(\bb C)$. 

Let us compute the norm of the functionals $\tilde\phi_w$. Let $f\in\rho\mc L$ and set
$F:=\tilde\iota f$. Then 
\[
	|\tilde\phi_wf|=|(\chi_w\circ\tilde\iota)f|=|F(w)|\leq\mf m(w)\|f\|
	\,.
\]
By continuity this relation holds for all $f\in\qu{\rho\mc L}$, and we obtain that 
\[
	\|\tilde\phi_w\|\leq\mf m(w),\qquad w\in\bb C
	\,.
\]
In other words, each function $F\in\tilde{\mc L}$ satisfies 
\[
	|F(w)|\leq\mf m(w)\|\rho F\|,\qquad w\in\bb C
	\,.
\]

%
\begin{center}
	\textit{\underline{Step 2; Showing 'algebraic de~Branges space'.}}
\end{center}
Consider the same setting as in the previous step. 
We are going to show that $\tilde{\mc L}$ is an algebraic de~Branges space. 

The map ${\rule{0pt}{5pt}.}^\#:F\mapsto F^\#$ maps $\bb H(\bb C)$ continuously into itself and
is involutory. Since $\mc L$ is an algebraic de~Branges space, its restriction 
${\rule{0pt}{5pt}.}^\#|_{\mc L}$ maps $\mc L$ onto $\mc L$. Complex conjugation 
$\qu{\rule{0pt}{5pt}.}:f\mapsto\qu f$ is an involutory homeomorphism of $L^1(\mu)$ onto itself. Clearly, 
\[
	\rho|_{\mc L}\circ\big({\rule{0pt}{5pt}.}^\#|_{\mc L}\big)=
	\big(\qu{\rule{0pt}{5pt}.}|_{\rho\mc L}\big)\circ\rho|_{\mc L}
	\,.
\]
First, this relation implies that $\qu{\rule{0pt}{5pt}.}$ maps $\rho\mc L$ onto itself, and hence also $\qu{\rho\mc L}$ onto itself. 
Second, it implies that ${\rule{0pt}{5pt}.}^\#\circ\iota=\iota\circ(\qu{\rule{0pt}{5pt}.}|_{\rho\mc L})$, and hence, by continuity, 
that also ${\rule{0pt}{5pt}.}^\#\circ\tilde\iota=\tilde\iota\circ(\qu{\rule{0pt}{5pt}.}|_{\qu{\rho\mc L}})$. Thus, in fact, 
${\rule{0pt}{5pt}.}^\#$ maps $\tilde{\mc L}$ into itself. 

Let $w\in\bb C$ be fixed, and consider the difference quotient operator $\mc R_w:\bb H(\bb C)\!\times\!\bb H(\bb C)\to\bb H(\bb C)$, that is 
\[
	\mc R_w[F,G](z):=
	\begin{cases}
		\frac{F(z)G(w)-G(z)F(w)}{z-w} &\hspace*{-3mm},\quad z\neq w\\ 
		{\scriptstyle F'(w)G(w)-G'(w)F(w)} &\hspace*{-3mm},\quad z=w\\ 
	\end{cases}
	\ ,\quad F,G\in\bb H(\bb C)
	\,.
\]
By the Schwarz lemma we have, for each compact set $K$ (denote $B_1(w):=\{z\in\bb C:\,|z-w|\leq 1\}$), 
\[
	\sup_{z\in K}|\mc R_w[F,G](z)|\leq 2\sup_{z\in K\cup B_1(w)}|F(z)|\cdot\sup_{z\in K\cup B_1(w)}|G(z)|,\quad F,G\in\bb H(\bb C)
	\,,
\]
and this implies that $\mc R_w$ is continuous. 

The $L^1$-counterpart of the difference quotient operator is the map $\mc R_w^1$ 
defined on $\qu{\rho\mc L}\!\times\!\qu{\rho\mc L}$ as (the second alternative occurs of course only if $w\in\bb R$) 
\[
	\mc R_w^1[f,g](x):=
	\begin{cases}
		\frac{f(x)(\tilde\iota g)(w)-g(x)(\tilde\iota f)(w)}{x-w} &\hspace*{-3mm},\quad x\neq w\\ 
		{\scriptstyle (\tilde\iota f)'(w)(\tilde\iota g)(w)-(\tilde\iota g)'(w)(\tilde\iota f)(w)} &\hspace*{-3mm},\quad x=w\\ 
	\end{cases}
	\ ,\quad f,g\in\qu{\rho\mc L}
	\,.
\]
From this definition we immediately see that 
\begin{equation}\label{N32}
	\mc R_w^1=\rho\circ\mc R_w\circ(\tilde\iota\!\times\!\tilde\iota)
	\,.
\end{equation}
Let us show that indeed $\mc R_w^1[f,g]\in L^1(\mu)$ and that $\mc R_w^1$ is continuous. First, 
\[
	\big\|\mathds{1}_{\bb R\setminus B_1(w)}\mc R_w^1[f,g]\big\|_{L^1(\mu)}\leq 
	2\mf m(w)\|f\|_{L^1(\mu)}\|g\|_{L^1(\mu)}
	\,,
\]
and this implies that the map $(f,g)\mapsto\mathds{1}_{\bb R\setminus B_1(w)}\mc R_w^1[f,g]$ maps $\qu{\rho\mc L}\!\times\!\qu{\rho\mc L}$ 
continuously into $L^1(\mu)$. The function $\mathds{1}_{\bb R\cap B_1(w)}\mc R_w^1[f,g]$ is $\mu$-a.e.\ 
piecewise continuous and has compact support. Hence, it 
clearly belongs to $L^1(\mu)$. Moreover, due to \eqref{N32}, the map $(f,g)\mapsto\mathds{1}_{\bb R\cap B_1(w)}\mc R_w^1[f,g]$ is continuous as 
a composition of continuous maps. Note here that, although $\rho$ itself is not continuous, for each compact set $K$ the map 
$F\mapsto\mathds{1}_K\cdot\rho F$ is. 

Since $\mc L$ is an algebraic de~Branges space, we have $\mc R_w(\mc L\!\times\!\mc L)\subseteq\mc L$. Due to \eqref{N32}, thus also 
$\mc R_w^1(\rho\mc L\!\times\!\rho\mc L)\subseteq\rho\mc L$, and continuity implies 
\[
	\mc R_w^1\big(\,\qu{\rho\mc L}\!\times\!\qu{\rho\mc L}\,\big)\subseteq\qu{\rho\mc L}
	\,.
\]
Now we may multiply \eqref{N32} with $\tilde\iota$ from the left and $\rho\!\times\!\rho$ from the right to obtain 
\[
	\tilde\iota\circ\mc R_w^1\circ(\rho\!\times\!\rho)|_{\tilde{\mc L}\!\times\!\tilde{\mc L}}=\mc R_w|_{\tilde{\mc L}\!\times\!\tilde{\mc L}}
\]
and conclude that $\mc R_w(\tilde{\mc L}\!\times\!\tilde{\mc L})\subseteq\tilde{\mc L}$. In particular, $\tilde{\mc L}$ is invariant with respect to 
division of zeros. 

\begin{center}
	\textit{\underline{Step 3; Invoking de~Branges' Lemma.}}
\end{center}
From now on assume that \thref{N3}, $(i)$, holds. 
De~Branges' lemma provides us with a positive Borel measure $\mu$, $\mu\neq 0$, and a function 
$g_0\in L^\infty(\mu)$ with $|g_0|=1$\ $\mu$-a.e., such that 
\begin{enumerate}[$(i)$]
\item $\int_{\bb R}W\,d\mu<\infty$, in particular $\rho\mc L\subseteq L^1(\mu)$;
\item $(\rho\mc L)^\perp=\spn\{g_0\}$, and hence $\qu{\rho\mc L}=\{g_0\}^\perp$.
\end{enumerate}
Since $\mc L\neq\{0\}$, the support of the measure $\mu$ must contain at least two points. 

The first thing to show is that $\rho$ maps $\mc L$ injectively into $L^1(\mu)$. Assume that $F\in\mc L$ and that $F|_{\bb R}=0$\ $\mu$-a.e. 
If $\supp\mu$ is not discrete, this implies immediately that $F=0$. Hence assume that $\supp\mu$ is discrete. Then we must have $F(x)=0$, 
$x\in\supp\mu$. Pick $x_0\in\supp\mu$, denote by $l$ the multiplicity of $x_0$ as a zero of $F$, and set $G(z):=(z-x_0)^{-l}F(z)$. Then 
$G\in\mc L$, and $G(x_0)\neq 0$ whereas $G(x)=0$ for all $x\in\supp\mu\setminus\{x_0\}$. Since $g_0(x_0)\neq 0$, this contradicts 
the fact that $\int_{\bb R}Gg_0\,d\mu=0$. 

Next, we show that the measure $\mu$ is discrete. Assume on the contrary that $x_0\in\bb R$ is an accumulation point of $\supp\mu$. Choose an 
interval $[a,b]$, such that $x_0\not\in[a,b]$ and ${\rm card}\, ([a,b]\cap\supp\mu)
\geq 2$. Then $\dim  L^1(\mu|_{[a,b]})>1$, and we can choose $f\in L^1(\mu)\setminus\{0\}$ with 
\[
	f(x)=0,\ x\in\bb R\setminus[a,b],\qquad \int_{\bb R}fg_0\,d\mu=0
	\,.
\]
Then $f\in\qu{\rho\mc L}$, and hence 
\[
	f(x)=(\rho\tilde\iota f)(x),\quad x\in\bb R\ \text{$\mu$-a.e.}
\]
The function $F:=\tilde\iota f$ is entire and does not vanish identically. However, since $\rho F=f$\ $\mu$-a.e., we must have $F(x)=0$, 
$x\in\bb R\setminus[a,b]$\ $\mu$-a.e. The set $(\supp\mu)\setminus[a,b]$ has the accumulation point $x_0$, and we conclude that $F=0$, 
a contradiction. 

As a consequence, we can interpret the action of $\tilde\phi_x$ for $x\in\supp\mu$ as point 
evaluation: Let $f\in\qu{\rho\mc L}$, then $f=\rho\tilde\iota f$ in $L^1(\mu)$, i.e.,
\[
	f(x)=(\tilde\iota f)(x),\quad x\in\supp\mu
	\,.
\]
It follows that 
\[
	\tilde\phi_xf=(\chi_x\circ\tilde\iota)f=(\tilde\iota f)(x)=f(x),\quad x\in\supp\mu
	\,.
\]

%
\begin{center}
	\textit{\underline{Step 4; The functions $H_t$.}}
\end{center}
Fix a point $t_0\in\supp\mu$. For each $t\in[\supp\mu]\setminus\{t_0\}$ we define a function $h_t:\bb R\to\bb C$ as 
\[
	h_t(x):=
	\begin{cases}
		-[(t-t_0)g_0(t_0)\mu(\{t_0\})]^{-1}, &\hspace*{-3mm}\quad x=t_0,\\ 
		[(t-t_0)g_0(t)\mu(\{t\})]^{-1}, &\hspace*{-3mm}\quad x=t,\\ 
		0, &\hspace*{-3mm}\quad \text{otherwise}.\\ 
	\end{cases}
\]
Remember here that $\supp\mu$ contains at least two points. We have 
$h_t\in L^1(\mu)$ and $\int_{\bb R}h_tg_0\,d\mu=0$, and hence $h_t\in\qu{\rho\mc L}$. Moreover, 
$h_t\in\ker\tilde\phi_{t'}$ whenever $t'\in[\supp\mu]\setminus\{t_0,t\}$. Define 
\[
	H_t:=\tilde\iota h_t
	\,.
\]
We establish the essential properties of the functions $H_t$ in the following three lemmata. 

\begin{lemma}\thlab{N14}
	Let $t,t'\in[\supp\mu]\setminus\{t_0\}$. Then 
	\begin{equation}\label{N15}
		(z-t)H_t(z)=(z-t')H_{t'}(z)
		\,.
	\end{equation}
\end{lemma}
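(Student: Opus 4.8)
The plan is to reduce \eqref{N15} to the injectivity of $\rho|_{\tilde{\mc L}}$ established in Step~1, by writing the difference
\[
	D(z):=(z-t)H_t(z)-(z-t')H_{t'}(z)
\]
as $(z-t)$ times an element of $\tilde{\mc L}$ that vanishes $\mu$-almost everywhere. We may of course assume $t\neq t'$, the case $t=t'$ being trivial.

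First I would bring the problem inside $\tilde{\mc L}$. Since $t\notin\{t_0,t'\}$, the function $h_{t'}$ vanishes at $t$, and as $H_{t'}=\tilde\iota h_{t'}$ restricts to $h_{t'}$ on $\supp\mu$ (Step~3) we get $H_{t'}(t)=0$; hence $H_{t'}(z)/(z-t)$ is entire, and by axiom B2 for the algebraic de~Branges space $\tilde{\mc L}$ (Step~2) it belongs to $\tilde{\mc L}$. Therefore
\[
	U(z):=\big(H_t(z)-H_{t'}(z)\big)-(t-t')\,\frac{H_{t'}(z)}{z-t}\ \in\ \tilde{\mc L}
	\,,
\]
and a one-line algebraic manipulation shows that $(z-t)\,U(z)=D(z)$.

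Next I would compute $D$ on $\supp\mu$. Using $H_s(x)=h_s(x)$ for $x\in\supp\mu$, $s\in\{t,t'\}$, one has $D(x)=(x-t)h_t(x)-(x-t')h_{t'}(x)$ for $x\in\supp\mu$, and it suffices to inspect the finitely many points where $h_t$ or $h_{t'}$ does not vanish: if $x\notin\{t_0,t,t'\}$ both terms are $0$; if $x=t$ resp.\ $x=t'$ the surviving term carries the factor $h_{t'}(t)=0$ resp.\ $h_t(t')=0$; and at $x=t_0$ one computes $(t_0-t)h_t(t_0)=(t_0-t')h_{t'}(t_0)=[g_0(t_0)\mu(\{t_0\})]^{-1}$, so the two terms cancel. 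Hence $D\equiv 0$ on $\supp\mu$, and therefore $U=D/(z-t)$ vanishes on $\supp\mu\setminus\{t\}$.

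It remains to handle the single point $t$, which is the only place where de~Branges' lemma is really used. Since $U\in\tilde{\mc L}$ we have $\rho U\in\qu{\rho\mc L}=\{g_0\}^\perp$, i.e.\ $\int_{\bb R}Ug_0\,d\mu=0$; as $\mu$ is discrete (Step~3) and $U$ vanishes on $\supp\mu\setminus\{t\}$, this integral collapses to $U(t)\,g_0(t)\,\mu(\{t\})$, and since $|g_0(t)|=1$ and $\mu(\{t\})>0$ we conclude $U(t)=0$. Thus $U\in\tilde{\mc L}$ vanishes on all of $\supp\mu$, hence (again by discreteness of $\mu$) $\rho U=0$ in $L^1(\mu)$, so $U=0$ by injectivity of $\rho|_{\tilde{\mc L}}$; consequently $D=(z-t)\,U=0$, which is \eqref{N15}. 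The point to watch is that $D$ itself is in general \emph{not} in $\tilde{\mc L}$ --- one has division of zeros at one's disposal, not multiplication --- which is precisely why the argument has to pass through $U$, at the cost of ``losing'' the point $t$, a loss that the one-dimensionality of the annihilator then repairs.
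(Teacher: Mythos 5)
Your argument is correct, and it runs on the same engine as the paper's proof: evaluate everything on the atoms of $\mu$, and use orthogonality to the annihilating function $g_0$ to recover the single value that the pointwise computation cannot reach. The packaging differs mildly. The paper stays on the $L^1$ side: choosing $G_0\in\mc L$ with $G_0(t')=1$, it shows that the function $\big(\id+(t'-t)\mc R^1_{t'}[\cdot,\rho G_0]\big)h_t$, i.e.\ $h_t$ multiplied by $\frac{x-t}{x-t'}$, equals $h_{t'}$ (the value at $t'$ being pinned down by $\int fg_0\,d\mu=0$), and then transfers the identity to entire functions by applying $\tilde\iota$ together with the intertwining relation \eqref{N32}. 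You instead work on the entire-function side: you divide out the zero of $H_{t'}$ at $t$ (using the division-of-zeros property of $\tilde{\mc L}$ from Step~2; at this particular point it is also immediate, since $H_t(t)\neq 0$ and $\mc R_t[H_{t'},H_t]=H_t(t)\,H_{t'}(z)/(z-t)$), form $U\in\tilde{\mc L}$ with $(z-t)U=D$, kill $U$ on $\supp\mu\setminus\{t\}$ by the same pointwise computation, pin $U(t)=0$ by $g_0$-orthogonality, and conclude $U=0$ from the injectivity of $\rho|_{\tilde{\mc L}}$. What your version buys is that no auxiliary $G_0\in\mc L$ and no explicit appeal to \eqref{N32} are needed; what the paper's version buys is that it never has to invoke B2 for $\tilde{\mc L}$ or the injectivity of $\rho|_{\tilde{\mc L}}$ as separate facts, since the map $\tilde\iota$ does both jobs at once. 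All the facts you cite (pointwise evaluation on $\supp\mu$, discreteness of $\mu$, $\qu{\rho\mc L}=\{g_0\}^\perp$, bijectivity of $\rho|_{\tilde{\mc L}}$) are indeed available at this stage, so the proof is sound.
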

\begin{proof}
	For $t=t'$ this relation is of course trivial. Hence, assume that $t\neq t'$. Choose a function $G_0\in\mc L$ with $G_0(t')=1$, and 
	consider the function 
	\[
		f:=\big(\id+(t-t')\mc R_{t'}^1[.,\rho G_0]\big)h_t\in\qu{\rho\mc L}
		\,.
	\]
	The value of $f$ at points $x\in\bb R\setminus\{t'\}$ is computed easily from the definition of $\mc R_{t'}^1[.,\rho G_0]$: 
	\begin{multline*}
		f(x)=h_t(x)+(t-t')\frac{h_t(x)}{x-t'}=\frac{x-t}{x-t'}h_t(x)=
			\\
		=
		\begin{cases}
			-[(t'-t_0)g_0(t_0)\mu(\{t_0\})]^{-1}, &\hspace*{-3mm}\quad x=t_0,\\ 
			0, &\hspace*{-3mm}\quad x\in\bb R\setminus\{t_0,t'\},\\ 
		\end{cases}
	\end{multline*}
	Since $f\in\qu{\rho\mc L}$, we have $\int_{\bb R}fg_0\,d\mu=0$, and hence the value of 
	$f$ at $t'$ must be 
	\[
		f(t')=[(t'-t_0)g_0(t')\mu(\{t'\})]^{-1}
		\,.
	\]
	We see that $f=h_{t'}$. 

	Now we can compute 
	\begin{multline*}
		H_{t'}=\tilde\iota h_{t'}=\tilde\iota\big([\id+(t-t')\mc R_{t'}^1[.,\rho G_0]\big)h_t=
		\\
		=\big(\id+(t'-t)\mc R_{t'}[.,\rho G_0]\big)\tilde\iota h_t=\big(\id+(t'-t)\mc R_{t'}[.,\rho G_0]\big)H_t
		\,.
	\end{multline*}
	However, 
	\[
		\big(\id+(t'-t)\mc R_{t'}[.,\rho G_0]\big)H_t(z)=\frac{z-t}{z-t'}H_t(z)
		\,,
	\]
	and the desired relation \eqref{N15} follows.
\end{proof}

\begin{lemma}\thlab{N16}
	We have $H_t=H_t^\#$. The function $H_t$ has simple zeros at the points $[\supp\mu]\setminus\{t_0,t\}$, and no zeros otherwise. 
\end{lemma}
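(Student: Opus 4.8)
The plan is to derive the lemma entirely from three facts established above: the intertwining relation ${\rule{0pt}{5pt}.}^\#\circ\tilde\iota=\tilde\iota\circ(\qu{\rule{0pt}{5pt}.}|_{\qu{\rho\mc L}})$ and the fact that $\tilde{\mc L}$ is an algebraic de~Branges space (both from Step~2), together with the identities $\tilde\phi_xf=f(x)$ for $x\in\supp\mu$, $f\in\qu{\rho\mc L}$, and $\qu{\rho\mc L}=\{g_0\}^\perp$ (Step~3). I will also use that $g_0$ may be taken real-valued; this is legitimate because $\qu{\rho\mc L}=\{g_0\}^\perp$ is invariant under complex conjugation by Step~2, which forces $\qu{g_0}$ to be a scalar multiple of $g_0$, so a suitable unimodular multiple of $g_0$ is real and still spans the annihilator.

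First I would settle $H_t=H_t^\#$. Since $t,t_0\in\bb R$, $\mu$ is positive, and $g_0$ is real, the function $h_t$ is real-valued, so $\qu{h_t}=h_t$ in $L^1(\mu)$; applying the intertwining relation, $H_t^\#=(\tilde\iota h_t)^\#=\tilde\iota(\qu{h_t})=\tilde\iota h_t=H_t$. Next I would pin down the zeros. Because $\mu$ is discrete (Step~3) and $\tilde\phi_x=\chi_x\circ\tilde\iota$ acts as point evaluation on $\supp\mu$, one has $H_t(x)=(\tilde\iota h_t)(x)=h_t(x)$ for every $x\in\supp\mu$; hence $H_t$ vanishes on $\supp\mu\setminus\{t_0,t\}$, while $H_t(t_0),H_t(t)\neq0$ since every factor in the explicit formula for $h_t$ is nonzero (here $\mu(\{t_0\}),\mu(\{t\})>0$ as $t_0,t\in\supp\mu$, and $g_0=\pm1$). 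To rule out further zeros and to get simplicity, suppose $H_t(w)=0$ for some $w\in\bb C$. Since $\tilde{\mc L}$ is an algebraic de~Branges space, $G:=H_t(z)/(z-w)$ lies in $\tilde{\mc L}$, hence $\rho G\in\qu{\rho\mc L}=\{g_0\}^\perp$, i.e.\ $\int_{\bb R}(\rho G)g_0\,d\mu=0$. Writing the integral as a sum over the discrete set $\supp\mu$ and using $H_t|_{\supp\mu}=h_t$, which is supported on $\{t_0,t\}$, a short computation involving only the two point masses at $t_0$ and $t$ gives
\[
	0=\int_{\bb R}(\rho G)\,g_0\,d\mu
	=\frac{-1}{(t-w)(t_0-w)}
	+\mathds{1}_{w\in\supp\mu}\cdot H_t'(w)\,g_0(w)\,\mu(\{w\})\,,
\]
the last term arising because $(\rho G)(w)=G(w)=H_t'(w)$ when $w\in\supp\mu$ (recall $H_t(w)=0$). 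If $w\notin\supp\mu$ — in particular if $w\notin\bb R$ — this reads $-[(t-w)(t_0-w)]^{-1}=0$, which is impossible; thus every zero of $H_t$ lies in $\supp\mu$, hence in $\supp\mu\setminus\{t_0,t\}$. If $w\in\supp\mu\setminus\{t_0,t\}$, then $\mu(\{w\})>0$, and the displayed identity forces $H_t'(w)=[(t-w)(t_0-w)g_0(w)\mu(\{w\})]^{-1}\neq0$, so the zero at $w$ is simple. This proves the lemma.

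The routine pieces are the bookkeeping in the summation and checking that $G$ is a bona fide element of $L^1(\mu)$ after restriction. The one delicate point is the case $w\in\supp\mu$: there $G=H_t/(z-w)$ must be read as the entire quotient, so its value at $w$ is $H_t'(w)$, not a naive ratio — and it is precisely this extra term that is simultaneously consistent with $H_t(w)=0$ and forces $H_t'(w)\neq0$. A secondary point, worth only a sentence, is the reduction to real-valued $g_0$, which is what makes $H_t=H_t^\#$ go through.
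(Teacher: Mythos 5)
Your proof is correct, and it runs on the same engine as the paper's---push a rational modification of $H_t$ into $\tilde{\mc L}$ and test it against the one-dimensional annihilator $\spn\{g_0\}$---but it differs in two structural points worth recording. First, the paper obtains simplicity of the zeros at $[\supp\mu]\setminus\{t_0,t\}$ from \thref{N14}, i.e.\ from $(z-t)H_t(z)=(z-t')H_{t'}(z)$ together with $H_{t'}(t')\neq 0$, and uses the annihilator computation only to exclude zeros off $\supp\mu$ (there with the auxiliary function $\frac{z-t}{z-w}H_t(z)$, produced via $\mc R_w$ and a normalizing $G_0\in\mc L$ with $G_0(w)=1$). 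You instead divide by $z-w$ directly---legitimate, since $\tilde{\mc L}$ is an algebraic de~Branges space by Step~2 and the required $G_0\in\mc L$ with $G_0(w)\neq 0$ exists because $\mc L\neq\{0\}$ satisfies {\rm(B2)}---and your single identity $0=-[(t-w)(t_0-w)]^{-1}+\mathds{1}_{w\in\supp\mu}\,H_t'(w)g_0(w)\mu(\{w\})$ simultaneously rules out zeros off $\supp\mu$ and forces $H_t'(w)\neq 0$ at $w\in[\supp\mu]\setminus\{t_0,t\}$, so your argument does not use \thref{N14} at all; as a by-product, your value of $H_t'(w)$ is consistent with the formula $B'(x)=[g_0(x)\mu(\{x\})]^{-1}$ obtained in Step~5. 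Second, you make explicit why $h_t=\qu{h_t}$: the statement of \thref{N18} only gives $|g_0|=1$, and the realness of $g_0$ (used tacitly in the paper, and in fact available from its construction via a real extremal measure) is recovered by your conjugation-invariance argument, which correctly shows $\qu{g_0}=cg_0$ with $|c|=1$ and that a unimodular multiple of $g_0$ is real; this is a worthwhile clarification rather than a deviation. Net effect: a correct proof, self-contained modulo Steps~2--3, trading the use of the preceding lemma for a slightly longer bookkeeping computation.
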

\begin{proof}
	Since $h_t=\qu{h_t}$, we have 
	\[
		H_t^\#=(\tilde\iota h_t)^\#=\tilde\iota(\qu{h_t})=\tilde\iota h_t=H_t
		\,.
	\]
	Let $t'\in[\supp\mu]\setminus\{t_0,t\}$. Since $H_{t'}(t')=h_{t'}(t')\neq 0$, the relation \eqref{N15} shows that $H_t$ has a simple zero 
	at $t'$. For $x\in\{t_0,t\}$, we have $H_t(x)=h_t(x)\neq 0$. 

	Let $w\in\bb C\setminus\supp\mu$, and assume on the contrary that $H_t(w)=0$. Then $H_t\in\ker(\chi_w|_{\tilde{\mc L}})$, and therefore 
	(choose $G_0\in\mc L$ with $G_0(w)=1$) 
	\[
		G:=\big(\id+(w-t)\mc R_w[.,G_0]\big)H_t\in\tilde{\mc L}
		\,.
	\]
	This implies that $\rho G\in\qu{\rho\mc L}$. 

	Clearly, $G(z)=\frac{z-t}{z-w}H_t(z)$, and we can evaluate $G$ at points $x\in\supp\mu$ as 
	\[
		G(x)=
		\begin{cases}
			[(t_0-w)g_0(t_0)\mu(\{t_0\})]^{-1}, &\hspace*{-3mm}\quad x=t_0,\\ 
			0, &\hspace*{-3mm}\quad x\in[\supp\mu]\setminus\{t_0\}.\\ 
		\end{cases}
	\]
	This shows that $\int_{\bb R}Gg_0\,d\mu\neq 0$, and we have reached a contradiction. 
\end{proof}

\begin{lemma}\thlab{N17}
	For each $F\in\tilde{\mc L}$ we have 
	\[
		F(z)=\sum_{\substack{t\in\supp\mu\\ t\neq t_0}} F(t)\mu(\{t\})g_0(t)(t-t_0)H_t(z)
		\,,
	\]
	where the series converges locally uniformly. 
\end{lemma}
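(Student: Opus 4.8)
The plan is to establish the expansion by testing $F$ against the functional $\tilde\phi_x$ for $x \in \supp\mu$ and using the fact that $\qu{\rho\mc L} = \{g_0\}^\perp$ determines functions in $\tilde{\mc L}$ through their values on $\supp\mu$. First I would fix $F \in \tilde{\mc L}$ and consider, for $t' \in [\supp\mu]\setminus\{t_0\}$, the quantity $F(t') - F(t_0)\cdot\frac{\text{(something)}}{\text{(something)}}$; more precisely, the candidate right-hand side, call it $G(z) := \sum_{t \neq t_0} F(t)\mu(\{t\})g_0(t)(t-t_0)H_t(z)$, should be compared with $F$ by evaluating both sides at each point of $\supp\mu$. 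Using \thref{N16}, the function $H_t$ vanishes at every point of $[\supp\mu]\setminus\{t_0,t\}$ and is nonzero at $t_0$ and $t$, so evaluating the (formal) series at a fixed $t' \in [\supp\mu]\setminus\{t_0\}$ leaves only the term with $t = t'$ plus a contribution at $t_0$ from every term. At $t'$ itself: $H_{t'}(t') = h_{t'}(t') = [(t'-t_0)g_0(t')\mu(\{t'\})]^{-1}$, so the $t=t'$ term contributes exactly $F(t')$. At $t_0$: $H_t(t_0) = h_t(t_0) = -[(t-t_0)g_0(t_0)\mu(\{t_0\})]^{-1}$, so the $t$-th term contributes $-F(t)\mu(\{t\})g_0(t)/(g_0(t_0)\mu(\{t_0\}))$, and summing over $t$ gives $-[g_0(t_0)\mu(\{t_0\})]^{-1}\sum_{t\neq t_0}F(t)\mu(\{t\})g_0(t)$.

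Next I would package this as: $G$ agrees with $F$ at every $t' \in [\supp\mu]\setminus\{t_0\}$, and at $t_0$ we have $G(t_0) = F(t_0) + c$ where $c = -[g_0(t_0)\mu(\{t_0\})]^{-1}\sum_{t\in\supp\mu}F(t)\mu(\{t\})g_0(t)$. But $\rho F \in \qu{\rho\mc L} = \{g_0\}^\perp$ (every element of $\tilde{\mc L}$ restricts into $\qu{\rho\mc L}$ since $\tilde{\mc L} = \tilde\iota(\qu{\rho\mc L})$ and $\rho\circ\tilde\iota = \id$), so $\int_{\bb R} F g_0\,d\mu = \sum_{t\in\supp\mu} F(t)g_0(t)\mu(\{t\}) = 0$ because $\mu$ is discrete (established in Step 3). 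Hence $c = 0$, and $G$ agrees with $F$ on all of $\supp\mu$. Since $\rho F$ and $\rho G$ both lie in $\qu{\rho\mc L}$ and agree $\mu$-a.e.\ (as $\mu$ is supported on $\supp\mu$), and since $\rho|_{\tilde{\mc L}}$ is injective onto $\qu{\rho\mc L}$ with inverse $\tilde\iota$, I conclude $F = G$ as entire functions.

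The main obstacle is the convergence claim: one must justify that $\sum_{t\neq t_0} F(t)\mu(\{t\})g_0(t)(t-t_0)H_t(z)$ converges locally uniformly in $z$, so that the termwise evaluation above is legitimate and $G$ is actually entire. For this I would use the bound from Step 1, namely $|\tilde\phi_w| \leq \mf m(w)$ with $\mf m$ locally bounded, applied to the partial sums. A cleaner route: fix a large index set and observe that the partial sum $G_N := \sum_{t \in S_N} F(t)\mu(\{t\})g_0(t)(t-t_0)H_t$ (with $S_N \subseteq [\supp\mu]\setminus\{t_0\}$ finite) lies in $\tilde{\mc L}$; compute $\rho(F - G_N)$ on $\supp\mu$ — it vanishes on $S_N$ and at $t_0$ equals (up to the constant) a tail sum, and at points of $[\supp\mu]\setminus(S_N\cup\{t_0\})$ equals $F(t)$; then $\|\rho(F-G_N)\|_{L^1(\mu)} \to 0$ because it is dominated by $\sum_{t\notin S_N}|F(t)|\mu(\{t\}) \leq \|\rho F\|_{L^1(\mu)} < \infty$ (a convergent series, using also $|h_t| \le |g_0|^{-1} = 1$ on its support to control the $t_0$-contribution), hence $G_N \to F$ in $L^1(\mu)$, hence $\tilde\iota$-continuity gives $G_N = \tilde\iota\rho G_N \to \tilde\iota\rho F = F$ locally uniformly, which is exactly the asserted convergence. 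I expect the bookkeeping of the $t_0$-term across partial sums to be the only genuinely fiddly point.
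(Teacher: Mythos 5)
Your proposal is correct and follows essentially the same route as the paper: evaluate the series at the atoms of the (discrete) measure, use the orthogonality $\int F g_0\,d\mu=0$ to fix the value at $t_0$, obtain convergence of the partial sums to $\rho F$ in $L^1(\mu)$ via $|g_0|=1$ and $\|\rho F\|_{L^1(\mu)}<\infty$, and then transfer to locally uniform convergence by continuity of $\tilde\iota$. The only (cosmetic) difference is that the paper first proves the $L^1$-expansion $f=\sum_t f(t)\mu(\{t\})g_0(t)(t-t_0)h_t$ for arbitrary $f\in\qu{\rho\mc L}$ and then applies $\tilde\iota$, whereas you work directly with the partial sums in $\tilde{\mc L}$; your parenthetical bound on $|h_t|$ is slightly misstated, but the estimate you actually use is the correct one.
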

\begin{proof}
	Since $|g_0|=1$\ $\mu$-a.e., we have 
	\[
		\|h_t\|_{L^1(\mu)}=\int_{\bb R}|h_t|\,d\mu=\frac 1{|t-t_0|}\Big(\frac 1{|g_0(t_0)|}+\frac 1{|g_0(t)|}\Big)=\frac 2{|t-t_0|}
		\,.
	\]
	Hence, 
	\[
		\|g_0(t)(t-t_0)h_t\|_{L^1(\mu)}=2
		\,,
	\]
	and therefore for each $f\in L^1(\mu)$ the series 
	\[
		g:=\sum_{\substack{t\in\supp\mu\\ t\neq t_0}} f(t)\mu(\{t\})\cdot g_0(t)(t-t_0)h_t
	\]
	converges in the norm of $L^1(\mu)$. Since $h_t\in\qu{\rho\mc L}$, it follows that also $g\in\qu{\rho\mc L}$, i.e.\ 
	$\int_{\bb R}gg_0\,d\mu=0$. 

	For $x\in[\supp\mu]\setminus\{t_0\}$, we can evaluate 
	\begin{multline*}
		g(x)=\sum_{\substack{t\in\supp\mu\\ t\neq t_0}} f(t)\mu(\{t\})\cdot g_0(t)(t-t_0)h_t(x)=
			\\
		=f(x)\mu(\{x\})\cdot g_0(x)(x-t_0)h_x(x)=f(x)
			\,.
	\end{multline*}
	Hence, the functions $g$ and $f$ differ, up to a $\mu$-zero set, at most at the point $t_0$. If we know in addition that 
	$f\in\qu{\rho\mc L}$, then also $\int_{\bb R}fg_0\,d\mu=0$, and it follows that $f(t_0)=g(t_0)$, i.e., that 
	$f=g$ in $L^1(\mu)$. 

	Now let $F\in\tilde{\mc L}$ be given. Then $\rho F\in\qu{\rho\mc L}$, and therefore 
	\[
		\rho F=\sum_{\substack{t\in\supp\mu\\ t\neq t_0}} F(t)\mu(\{t\})\cdot g_0(t)(t-t_0)h_t
		\,.
	\]
	Applying $\tilde\iota$, yields the desired representation of $F$. 
\end{proof}

%
\begin{center}
	\textit{\underline{Step 5; Construction of $B\in\mc K(\mc L,W)$.}}
\end{center}
Choose $t\in[\supp\mu]\setminus\{t_0\}$, and define 
\begin{equation}\label{N30}
	B(z):=(z-t_0)(z-t)H_t(z)
	\,.
\end{equation}
Due to \eqref{N15}, this definition does not depend on the choice of $t$. We are going to show 
that $B\in\mc K(\tilde{\mc L},W)$. 

By \thref{N17}, we have $B=B^\#$, and know that $B$ has simple zeros at the points $\supp\mu$ and no zeros otherwise; this is {\rm(K1)}. 
By \thref{N19}, for each $F\in\tilde{\mc L}$, the function $\frac F{H_t}$ is of bounded type in 
$\bb C^+$ and $\bb C^-$. Hence also $\frac FB$ has this property; and this is {\rm(K2)}. 

To show {\rm(K3)}, let $F\in\tilde{\mc L}$ be given. By \thref{N17}, 
\begin{align*}
	\frac{F(z)}{B(z)}= & \sum_{\substack{t\in\supp\mu\\ t\neq t_0}} F(t)\mu(\{t\})g_0(t)(t-t_0)\frac{H_t(z)}{B(z)}=
		\\
	= & \sum_{\substack{t\in\supp\mu\\ t\neq t_0}} F(t)\mu(\{t\})\cdot g_0(t)\frac{t-t_0}{z-t}\cdot\frac 1{z-t_0}
		\,.
\end{align*}
We have 
\[
	\sum_{t\in\supp\mu}|F(t)|\mu(\{t\})<\infty,\quad 
	|g_0(t)|=1,\quad 
	\sup_{\substack{|y|\geq 1\\ t\in\supp\mu}}\Big|\frac{t-t_0}{iy-t}\Big|<\infty
	\,,
\]
and hence, by bounded convergence, 
\[
	\lim_{y\to\pm\infty}\frac{F(iy)}{B(iy)}=0
	\,.
\]
Finally, for {\rm(K4)}, compute ($t\in[\supp\mu]\setminus\{t_0\}$ arbitrary) 
\begin{align*}
	B'(x)= & 
		\begin{cases}
			(x-t_0)H_x(x), &\hspace*{-3mm}\quad x\in[\supp\mu]\setminus\{t_0\}\\ 
			(x-t)H_t(x), &\hspace*{-3mm}\quad x=t_0\\ 
		\end{cases}
		\\
	= & \frac 1{g_0(x)\mu(\{x\})}
		\,.
\end{align*}
Remembering that $|g_0(x)|=1$\ $\mu$-a.e., we conclude that 
\[
	\sum_{x\in\supp\mu}\frac{W(x)}{|B'(x)|}=\int_{\bb R} W\,d\mu<\infty
	\,,
\]
and this is {\rm(K4)}.

\section{Computing mean type}

In this section we show the additional statement in \thref{N3}, that the function $B$ can be chosen such that 
\eqref{N27} holds. In fact, we show that the function $B$ constructed in the previous section, cf.\ \eqref{N30}, 
satisfies \eqref{N27}. 

First, let us observe that it is enough to prove that 
\begin{equation}\label{N26}
	\sup\Big\{\mt\frac F{\mf m}:\,F\in\mc L\Big\}=0
	\,.
\end{equation}
Indeed, by the definition of $B$, we have $(t\in[\supp\mu]\setminus\{t_0\})$ 
\[
	|B(iy)|=|iy-t_0|\cdot|iy-t|\cdot|H_t(iy)|\leq(y^2+t_0^2)^{\frac 12}(y^2+t^2)^{\frac 12}\cdot\mf m(iy)\|H_t\|_{L^1(\mu)}
	\,.
\]
Hence, for each $F\in\mc L$, 
\[
	\Big|\frac{F(iy)}{B(iy)}\Big|\geq\Big[(y^2+t_0^2)^{\frac 12}(y^2+t^2)^{\frac 12}\cdot\|H_t\|_{L^1(\mu)}\Big]^{-1}
	\cdot\frac{|F(iy)|}{\mf m(iy)}
	\,,
\]
and this implies that $\mt\frac FB\geq\mt \frac F{\mf m}$. By {\rm(K3)} always $\mt\frac FB\leq 0$, and it follows that 
\eqref{N26} implies \eqref{N27}. 

A proof of \eqref{N26} can be obtained from studying of the space 
\begin{align*}
	\mc K:=\Big\{F\in\bb H(\bb C):\quad & F|_{\bb R}\in L^1(\mu),\quad |F(iy)|=O\big(\mf m(iy)\big)\text{ as }|y|\to\infty,
		\\
	&\exists\,F_0\in\mc L\setminus\{0\}:\,\frac F{F_0}\text{ is of bounded type in }\bb C^+\text{ and }\bb C^-\Big\}
		\,,
\end{align*}
and the group of operators 
\[
	M_\alpha:\left\{
	\begin{array}{rcl}
		\bb H(\bb C) & \to & \bb H(\bb C)\\
		F(z) & \mapsto & e^{i\alpha z}F(z)
	\end{array}
	\right.
	,\qquad \alpha\in\bb R
	\,.
\]
The following fact is crucial. 

\begin{lemma}\thlab{N28}
	The restriction map $\rho:\mc K\to L^1(\mu)$ is injective. 
\end{lemma}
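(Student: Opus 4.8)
The plan is to argue by contradiction: assume there is a nonzero $F\in\mc K$ with $F|_{\bb R}=0$ $\mu$-a.e., and derive that $F$ vanishes identically. Since $\mu$ is a discrete measure (established in Step 3), $F|_{\bb R}=0$ $\mu$-a.e.\ means $F(x)=0$ for every $x\in\supp\mu$, save possibly a $\mu$-null subset of $\supp\mu$; but a $\mu$-null set in a discrete measure is empty, so in fact $F(x)=0$ for all $x\in\supp\mu$. Recall from Step 3 that $\supp\mu$ is a discrete set with infinitely many points (since $\rho\mc L\subseteq L^1(\mu)$ has infinitely many dimensions whenever $\mc L\neq\{0\}$ is an algebraic de~Branges space — closure under difference quotients forces $\dim\mc L=\infty$ once $\mc L$ contains a nonconstant function, and if every element of $\mc L$ were constant then $\mc L\subseteq C_0(W)$ forces $\mc L=\{0\}$). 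So $F$ has infinitely many zeros, namely all of $\supp\mu$.

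The core of the argument is then to show that "too many zeros" is incompatible with the growth and bounded-type conditions defining $\mc K$. First I would use the bounded-type hypothesis: by definition of $\mc K$ there is $F_0\in\mc L\setminus\{0\}$ with $F/F_0$ of bounded type in $\bb C^+$ and $\bb C^-$. Since $F_0\in\mc L\subseteq\tilde{\mc L}$, Pitt's theorem (\thref{N19}) applies to $F_0$ and also — because $F\in\mc K$ satisfies $F|_{\bb R}\in L^1(\mu)$ and $|F(iy)|=O(\mf m(iy))$, the same size bound used in Step 1 — to $F$ itself; more precisely the membership $F|_{\bb R}\in\qu{\rho\mc L}$ should be deduced from $F$ vanishing on $\supp\mu$ together with the description of $\qu{\rho\mc L}=\{g_0\}^\perp$, so that $F=\tilde\iota(\rho F)\in\tilde{\mc L}$ and \thref{N17} gives the representation $F(z)=\sum_{t\neq t_0}F(t)\mu(\{t\})g_0(t)(t-t_0)H_t(z)$. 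But $F(t)=0$ for every $t\in\supp\mu$, so every term vanishes and $F=0$, the desired contradiction.

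The main obstacle, and the step requiring care, is the claim that $F\in\mc K$ with $F$ vanishing $\mu$-a.e.\ on $\bb R$ actually lies in $\tilde{\mc L}$ — equivalently that $\rho F\in\qu{\rho\mc L}$. The definition of $\mc K$ does not mention $\tilde{\mc L}$; it only imposes an $L^1$-membership, a pointwise growth bound matching $\mf m$, and a bounded-type relation to one fixed element of $\mc L$. What must be shown is that these conditions, together with $\rho F=0$ in $L^1(\mu)$, force $\rho F\in\qu{\rho\mc L}$. Here I would invoke de~Branges' lemma (\thref{N18}): $\qu{\rho\mc L}=\{g_0\}^\perp$ is a hyperplane, so $\rho F\in\qu{\rho\mc L}$ iff $\int_{\bb R}(\rho F)g_0\,d\mu=0$, and since $\rho F=0$ in $L^1(\mu)$ this integral is trivially $0$. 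Thus $\rho F\in\qu{\rho\mc L}$, hence $\tilde\iota\rho F\in\tilde{\mc L}$, and it remains to identify $\tilde\iota\rho F$ with $F$. For this, note $\tilde\iota\rho F$ is the unique element of $\tilde{\mc L}$ restricting to $\rho F$ $\mu$-a.e., while $F$ is an entire function with the same restriction; then $G:=F-\tilde\iota\rho F$ is entire, of bounded type in both half-planes relative to $F_0$ (using \eqref{N11}-type control on $\tilde\iota\rho F$ from \thref{N19}), satisfies $|G(iy)|=O(\mf m(iy))$, and vanishes $\mu$-a.e.\ on $\bb R$ — so $G\in\mc K$ with $\rho G=0$; but now $G\in\tilde{\mc L}$ by the argument just given, and \thref{N17} applied to $G$ (whose values on $\supp\mu$ are all $0$) yields $G\equiv 0$. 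Hence $F=\tilde\iota\rho F\in\tilde{\mc L}$, \thref{N17} applies to $F$, and $F=0$ as above.
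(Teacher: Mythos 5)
There is a genuine gap, and it is a circularity at the decisive step. You set $G:=F-\tilde\iota\rho F$ and claim that ``$G\in\tilde{\mc L}$ by the argument just given''. But since $\rho F=0$ in $L^1(\mu)$, you have $\tilde\iota\rho F=\tilde\iota\,0=0$, so $G$ is just $F$ itself, and the ``argument just given'' only establishes the trivial fact $\rho G=0\in\qu{\rho\mc L}$ together with $\tilde\iota\rho G\in\tilde{\mc L}$; it gives no reason whatsoever why the entire function $G$ should coincide with $\tilde\iota\rho G$. The space $\tilde{\mc L}$ is by definition the range of $\tilde\iota$, so an element of $\mc K$ whose restriction lies in $\qu{\rho\mc L}$ is not automatically in $\tilde{\mc L}$; ruling out precisely such ``phantom'' entire extensions is the whole content of \thref{N28}, and with $\rho F=0$ the identity $F=\tilde\iota\rho F$ is literally the conclusion $F=0$ that you are trying to prove. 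Note also that the growth condition $|F(iy)|=O(\mf m(iy))$ and the bounded-type condition relative to some $F_0\in\mc L\setminus\{0\}$ are mentioned in your text but never actually enter the logic; yet without them the statement is false (any entire function vanishing on the discrete set $\supp\mu$, e.g.\ a suitable canonical product, restricts to $0$ $\mu$-a.e.), so a correct proof must use these hypotheses in an essential way. (A minor side issue: your argument that $\supp\mu$ must be infinite is both unnecessary and flawed --- nonzero constants can belong to $C_0(W)$ when $W$ tends to infinity --- but this is not the main problem.)

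For comparison, the paper's proof does use the defining conditions of $\mc K$ substantively: assuming $G\in\mc K\setminus\{0\}$ with $G|_{\bb R}=0$ $\mu$-a.e., it considers for $F\in\mc L$ and nonreal $z$ with $G(z)\neq 0$ the difference quotient $H(z,x)=\frac{F(z)G(x)-G(z)F(x)}{z-x}$, which equals $-G(z)\frac{F(x)}{z-x}$ $\mu$-a.e.; the bounded-type and growth hypotheses in the definition of $\mc K$ are exactly what allows one to run the argument of Pitt's Theorems 3.3 and 3.4 and conclude $H(z,\cdot)\in\qu{\rho\mc L}$, hence $\frac{F(x)}{z-x}\in\qu{\rho\mc L}$ for all $F\in\tilde{\mc L}$ and such $z$. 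Taking $F=H_t$ (the function from Step 4) and computing directly shows that $\frac{H_t(x)}{z-x}$ is not annihilated by $g_0$ for any nonreal $z$, which is the desired contradiction. Some mechanism of this kind, converting the bounded-type and $O(\mf m)$ conditions into membership of suitable difference quotients in $\qu{\rho\mc L}$, is what your proposal is missing.
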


\noindent
This fact follows immediately from the last sentence in \cite[p.284, Remarks]{pitt:1983}. However, in \cite{pitt:1983} 
no explicit proofs of these remarks are given. Hence, we include a proof, sticking to what is needed for the present purpose\footnote{The general 
assertion in \cite[p. 284, Remarks]{pitt:1983} can be shown with the same argument, only using more of the machinery developed 
earlier in \cite{pitt:1983}.}. 

\begin{proofof}{\thref{N28}}
	Assume on the contrary that there exists a function $G\in\mc K\setminus\{0\}$ with $G|_{\bb R}=0$ $\mu$-a.e. Let 
	$F\in\mc L$, let $z\in\bb C^+$ with $G(z)\neq 0$, and consider the function 
	\[
		H(z,x) = \frac{F(z)G(x)-G(z)F(x)}{z-x},\quad x\in\bb R
		\,.
	\]
	Then $H(z,.)\in L^1(\mu)$, and 
	\[
		H(z,x)=-G(z)\frac{F(x)}{z-x},\quad x\in\bb R\ \mu\text{-a.e.}
	\]
	The proof of \cite[Theorem 3.3]{pitt:1983}, with the modification also used in \cite[Theorem 3.4]{pitt:1983}, shows that 
	$H(z,.)\in\qu{\rho\mc L}$. Hence, $\frac{F(x)}{z-x}\in\qu{\rho\mc L}$ whenever $F\in\mc L$, and since multiplication with 
	$\frac 1{z-x}$ is for each fixed $z\in\bb C\setminus\bb R$ a bounded operator on $L^1(\mu)$, it follows that 
	\[
		\frac{F(x)}{z-x}\in\qu{\rho\mc L},\quad F\in\tilde{\mc L},\ z\in\bb C\setminus\bb R,G(z)\neq 0
		\,.
	\]
	Consider now the function $H_t$ constructed in Step 4 of the previous section. A short computation shows that 
	for no nonreal $z$ the function $\frac{H_t(x)}{z-x}$ is annihilated by $g_0$. We have reached a contradiction. 
\end{proofof}

\begin{corollary}\thlab{N29}
	We may define a norm on $\mc K$ by 
	\[
		\|F\|_{\mc K}:=\|\rho F\|_{L^1(\mu)},\quad F\in\mc K
		\,,
	\]
	and $\mc K$ is complete with respect to this norm. The point evaluation maps $\chi_w|_{\mc K}:\mc K\to\bb C$ are 
	continuous with respect to $\|.\|_{\mc K}$. 
\end{corollary}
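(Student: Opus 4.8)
That $\|\cdot\|_{\mc K}$ is a norm is immediate: it is a seminorm, being $\|\cdot\|_{L^1(\mu)}$ composed with the linear map $\rho$, and it is definite since $\rho|_{\mc K}$ is injective by \thref{N28}. (Here one uses that $\mc K$ is a linear space; this holds because the three conditions defining $\mc K$ are stable under sums once a common $F_0\in\mc L\setminus\{0\}$ is fixed for all the functions involved, thanks to the fact from \thref{N19} that $F_0/G_0$ is of bounded type for every $G_0\in\mc L\setminus\{0\}$.) The real content is a quantitative point-evaluation estimate
\[
	|F(w)|\leq C_w\,\|\rho F\|_{L^1(\mu)},\qquad F\in\mc K,\ w\in\bb C,
\]
with $w\mapsto C_w$ locally bounded and, along the imaginary axis, $C_{iy}=O(\mf m(iy))$ as $|y|\to\infty$; I would establish this first and deduce both assertions of the corollary from it.

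The plan for the estimate is to peel off from $F$ a summand in $\tilde{\mc L}$, where the bound $|G(w)|\leq\mf m(w)\|\rho G\|_{L^1(\mu)}$ from Step~1 and the continuity of $\mf m$ (\thref{N19}) are at our disposal. Applying the $L^1(\mu)$-convergent expansion from the proof of \thref{N17} to $f:=\rho F\in L^1(\mu)$, the series $g:=\sum_{t\in\supp\mu,\,t\neq t_0}f(t)\mu(\{t\})g_0(t)(t-t_0)h_t$ converges in $L^1(\mu)$ with $\|g\|_{L^1(\mu)}\le 2\|f\|_{L^1(\mu)}$, lies in $\qu{\rho\mc L}$, and agrees with $f$ on $\supp\mu\setminus\{t_0\}$. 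Put $G:=\tilde\iota g\in\tilde{\mc L}\subseteq\mc K$. Since $\mu$ is discrete and $\rho G=g$ $\mu$-a.e., the difference $E:=F-G\in\mc K$ vanishes at every point of $\supp\mu\setminus\{t_0\}$, has restriction concentrated at the atom $t_0$, and $\|\rho E\|_{L^1(\mu)}\le 3\|\rho F\|_{L^1(\mu)}$. Since $B$ has simple zeros precisely at $\supp\mu$, the quotient $Q(z):=E(z)(z-t_0)/B(z)$ is entire; it equals $E/\big((z-t)H_t\big)$, hence is of bounded type in $\bb C^+$ and $\bb C^-$ by \thref{N19}. If one shows that $Q$ is constant, then $Q\equiv Q(t_0)=E(t_0)/B'(t_0)=E(t_0)g_0(t_0)\mu(\{t_0\})$, so $|Q(t_0)|=\|\rho E\|_{L^1(\mu)}\le 3\|\rho F\|_{L^1(\mu)}$ and $E=Q(t_0)\cdot B/(z-t_0)$; consequently
\[
	|F(w)|\leq|G(w)|+|E(w)|\leq 2\mf m(w)\|\rho F\|_{L^1(\mu)}+3\Big|\tfrac{B(w)}{w-t_0}\Big|\,\|\rho F\|_{L^1(\mu)},
\]
and $C_w:=2\mf m(w)+3|B(w)/(w-t_0)|$ is continuous on $\bb C$ (the quotient has a removable singularity at $t_0$), hence locally bounded. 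Moreover the $E$-term carries a nonzero coefficient only when $B/(z-t_0)$ itself lies in $\mc K$ (being then a scalar multiple of $E$), in which case $B/(z-t_0)$ already obeys $|B(iy)/(iy-t_0)|=O(\mf m(iy))$; so in all cases $|F(iy)|=O(\mf m(iy))$ with implied constant controlled by $\|\rho F\|_{L^1(\mu)}$, and a case distinction according to whether $B/(z-t_0)\in\mc K$ (if not, the above forces $\mc K=\tilde{\mc L}$ and $C_w=\mf m(w)$) gives $C_{iy}=O(\mf m(iy))$ unconditionally.

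The step in which the bounded-type hypothesis built into $\mc K$ — as opposed to merely the growth hypothesis on $i\bb R$ — is genuinely needed is the claim that $Q$ is constant, and I expect this to be the main obstacle. The natural attack is via bounded-type / Phragmén--Lindelöf theory: $Q$ is an entire function of bounded type in both half-planes, and one wants to feed in growth control relating $|E(iy)|=O(\mf m(iy))$ to the size of $|B(iy)|$ (where $B$ is a Krein-class function for $\tilde{\mc L}$, so property (K3) pins down its growth), together with the analogous information for $E^\#$ (using that $\mc K$ is invariant under $F\mapsto F^\#$), to conclude that $Q$ has nonpositive mean type on both sides and is in fact bounded, hence constant. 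Making this precise is the crux, and it may well be cleanest to extract it from the remarks on p.\,284 of \cite{pitt:1983}, exactly as was done for \thref{N28}.

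Granting the estimate, completeness is routine. If $(F_n)$ is $\|\cdot\|_{\mc K}$-Cauchy, then $(\rho F_n)$ is Cauchy in $L^1(\mu)$, so $\rho F_n\to f$ for some $f\in L^1(\mu)$, and $M:=\sup_n\|\rho F_n\|_{L^1(\mu)}<\infty$; by the estimate $\sup_n|F_n(w)|\le C_w M$ uniformly on compacts, so Montel's theorem produces a subsequence $F_{n_k}$ converging locally uniformly to an entire function $F$, and passing to a further subsequence with $\rho F_{n_k}\to f$ pointwise $\mu$-a.e.\ gives $\rho F=f$ $\mu$-a.e.; also $|F(iy)|=\lim_k|F_{n_k}(iy)|\le C_{iy}M=O(\mf m(iy))$. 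Writing $F_n=G_n+Q_n(t_0)B/(z-t_0)$ and $F=G+Q(t_0)B/(z-t_0)$ via the decomposition above, the $G_n$ remain in a fixed $\mf m$-ball of $\tilde{\mc L}$ and the scalars $Q_n(t_0)$ remain bounded, whence $F/F_0$ is of bounded type for a fixed $F_0\in\mc L\setminus\{0\}$ (both summands have this property). Thus $F\in\mc K$, $\|F_{n_k}-F\|_{\mc K}=\|\rho F_{n_k}-f\|_{L^1(\mu)}\to 0$, and since a Cauchy sequence with a convergent subsequence converges, $F_n\to F$ in $\mc K$.
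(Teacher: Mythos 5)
Your argument funnels entirely into the claim that $Q(z):=E(z)(z-t_0)/B(z)$ is constant, i.e.\ that every $E\in\mc K$ whose restriction vanishes $\mu$-a.e.\ off the atom $t_0$ is a scalar multiple of $B(z)/(z-t_0)$. You leave exactly this step unproved, calling it ``the crux'' and ``the main obstacle'' and deferring to Phragm\'en--Lindel\"of theory or to the remarks in \cite{pitt:1983}. As sketched it does not yet go through: {\rm(K3)} only gives $|F(iy)|=o(|B(iy)|)$ for each \emph{fixed} $F\in\mc L$, which is not the uniform bound $|y|\,\mf m(iy)=O(|B(iy)|)$ you would need to make $Q$ bounded on $i\bb R$ (such a bound can in fact be extracted from the expansion of \thref{N17}, but you do not do it), and even with it one still needs a Phragm\'en--Lindel\"of/Liouville argument using finite exponential type; ``nonpositive mean type in both half-planes'' alone does not force constancy. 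Since the pointwise estimate $|F(w)|\le C_w\|\rho F\|_{L^1(\mu)}$, the case distinction on whether $B/(z-t_0)\in\mc K$, and the whole Montel-based completeness argument are all built on this claim, the proof as written is genuinely incomplete.

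It is worth noting that the missing step can be closed by pure linear algebra from \thref{N28}, with no complex analysis: if $E_1,E_2\in\mc K\setminus\{0\}$ have restrictions vanishing off $t_0$, then $E_i(t_0)\neq 0$ (otherwise $\rho E_i=0$ $\mu$-a.e.\ and $E_i=0$), and $E_1(t_0)E_2-E_2(t_0)E_1$ restricts to $0$, hence vanishes; so this subspace is at most one-dimensional and the estimate on it is trivial, without identifying it with the span of $B/(z-t_0)$. But once you see this you are essentially at the paper's proof, which avoids the decomposition altogether: since $\rho|_{\mc K}$ is injective and $\rho\mc K\supseteq\rho\tilde{\mc L}=\qu{\rho\mc L}$, which has codimension one in $L^1(\mu)$, the image $\rho\mc K$ is either $\qu{\rho\mc L}$ or $L^1(\mu)$; in both cases it is closed, so $\mc K$ is isometric to a closed subspace of $L^1(\mu)$ and hence complete. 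Moreover $\tilde{\mc L}$ is then a closed subspace of $\mc K$ of finite codimension on which $\chi_w$ is continuous by Step 1, and a linear functional continuous on a closed finite-codimensional subspace is continuous on the whole space. So the quantitative estimate, the growth claim $C_{iy}=O(\mf m(iy))$ (which the corollary does not require), and the bounded-type analysis are all unnecessary for the statement at hand.
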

\begin{proof}
	Since $\rho$ is injective, the norm $\|.\|_{\mc K}$ is well-defined. Since $\mc K\supseteq\tilde{\mc L}$, and 
	$\dim\big(L^1(\mu)/\qu{\rho\mc L}\big)=1$, there are only two possibilities: Either $\rho\mc K=\qu{\rho\mc L}$, or 
	$\rho\mc K=L^1(\mu)$. In both cases, $\mc K$ is complete. 

	Since $\tilde{\mc L}$ is a closed subspace of $\mc K$ with finite codimension (and hence a complemented subspace), and 
	since the restriction $(\chi_w|_{\mc K})|_{\tilde{\mc L}}$ of the point evaluation map $\chi_w|_{\mc K}$ to $\tilde{\mc L}$ is 
	continuous, it follows that $\chi_w|_{\mc K}$ itself is continuous. 
\end{proof}

\noindent
Now bring in the family of spaces (parametrized by $\tau_+,\tau_-\leq 0$) 
\begin{align*}
	\mc K_{(\tau_+,\tau_-)}:=\Big\{F\in\mc K:\quad & |F(iy)|=O\big(e^{\tau_+y}\mf m(iy)\big)\text{ as }y\to+\infty\,,
		\\
	& |F(iy)|=O\big(e^{\tau_-|y|}\mf m(iy)\big)\text{ as }y\to-\infty\Big\}
		\,.
\end{align*}
It is obvious that 
\[
	M_\alpha\big(\mc K_{(\tau_+,\tau_-)}\big)\subseteq\mc K_{(\tau_+-\alpha,\tau_-+\alpha)},\quad \alpha\in[\tau_+,-\tau_-]
	\,.
\]
Applying this once again with $M_{-\alpha}$ and $\mc K_{(\tau_+-\alpha,\tau_-+\alpha)}$ in place of $M_\alpha$ and $\mc K_{(\tau_+,\tau_-)}$, 
it follows that in fact $M_\alpha|_{\mc K_{(\tau_+,\tau_-)}}$ is a bijection of $\mc K_{(\tau_+,\tau_-)}$ onto 
$\mc K_{(\tau_+-\alpha,\tau_-+\alpha)}$. 
Since $M_\alpha$ is isometric, there exists an extension $\hat M_{\alpha;\tau_+,\tau_-}$ to an isometric bijection of 
$\qu{\mc K_{(\tau_+,\tau_-)}}$ onto $\qu{\mc K_{(\tau_+-\alpha,\tau_-+\alpha)}}$. Since the point evaluation maps are continuous, we 
have $\hat M_{\alpha;\tau_+,\tau_-}=M_\alpha|_{\qu{\mc K_{(\tau_+,\tau_-)}}}$. 

As a consequence, we obtain that the space $\mc K_{(\tau_+,\tau_-)}$ is a closed subspace of $\mc K$. Indeed, let $F\in\qu{\mc K_{(\tau_+,\tau_-)}}$ 
be given. Then $M_{\tau_+}F,M_{-\tau_-}F\in\mc K$. Hence, 
\begin{align*}
	& \big|e^{i\tau_+(iy)}F(iy)\big|=O\big(\mf m(iy)\big)\text{ as }y\to+\infty\,,\\
	& \big|e^{-i\tau_-(iy)}F(iy)\big|=O\big(\mf m(iy)\big)\text{ as }y\to-\infty\,,
\end{align*}
and this gives $|F(iy)|=O(e^{\tau_+y}\mf m(iy))$, $y\to+\infty$, and $|F(iy)|=O(e^{\tau_-|y|}\mf m(iy))$, $y\to-\infty$. 

To finish the proof of \eqref{N26}, one more simple observation is needed. 

\begin{remark}\thlab{N25}
	Let $F\in\mc K\setminus\{0\}$, and set $\tau_F:=\mt\frac F{\mf m}$. Then $-\infty<\tau_F\leq 0$, and 
	\begin{equation}\label{N31}
		\begin{aligned}
			& F\in\mc K_{(\tau_+,0)},\quad \tau_+\in(\tau_F,0]\,,\\
			& F\not\in\mc K_{(\tau_+,0)},\quad \tau_+<\tau_F\,.
		\end{aligned}
	\end{equation}
	It is obvious that $\mt\frac F{\mf m}\leq 0$ and that the relations \eqref{N31} hold. Assume that 
	$\mt\frac F{\mf m}=-\infty$. Then $F\in\bigcap_{\tau_+\leq 0}\mc K_{(\tau_+,0)}$, and hence $M_\alpha F\in\mc K$ for all 
	$\alpha\leq 0$. The family $\{M_\alpha F:\,\alpha\leq 0\}$ is bounded with respect to the norm $\|.\|_{\mc K}$, and by 
	continuity of point evaluations thus also pointwise bounded. It follows that $F(z)=0$ for all $z\in\bb C^+$, and hence everywhere. 
\end{remark}

\begin{proofof}{\eqref{N26}}
	Assume on the contrary that 
	\[
		\tau:=\sup\Big\{\mt\frac F{\mf m}:\,F\in\mc L\Big\}<0
		\,.
	\]
	Then $\mc L \subseteq \mc K_{(\tau+\varepsilon, 0)}$ and, since
	$\mc K_{(\tau+\varepsilon, 0)}$ is closed in $\mc K$, we conclude that
	$\tilde{\mc L} \subseteq \mc K_{(\tau+\varepsilon, 0)}$.
	Fix $F\in\mc L\setminus\{0\}$. Let $\alpha\in(\tau_F,\tau_F-\tau)$, and choose $\varepsilon>0$ such that 
	$[\alpha-\varepsilon,\alpha+\varepsilon]\subseteq(\tau_F,\tau_F-\tau)$ and $\tau+\varepsilon\leq 0$. 

	The function $F$ belongs to $\mc K_{(\tau_F+\varepsilon,0)}$, and we have $\tau_F+\varepsilon\leq\alpha\leq 0$. Hence, 
	$M_\alpha F\in\mc K_{(\tau_F+\varepsilon-\alpha,\alpha)}$. Assume that 
	this function would belong to $\tilde{\mc L}$. Then it would also belong to $\mc K_{(\tau+\varepsilon,0)}$, and together 
	with what we know, thus $M_\alpha F\in\mc K_{(\tau+\varepsilon,\alpha)}$. 
	We have $\tau+\varepsilon<\tau_F-\alpha\leq-\alpha$, and hence 
	\[
		F=M_{-\alpha}(M_\alpha F)\in\mc K_{(\tau+\varepsilon+\alpha,0)}
		\,.
	\]
	However, $\tau+\varepsilon+\alpha<\tau_F$, a contradiction
        in view of \eqref{N31}. We conclude that 
	\[
		M_\alpha F\in\mc K\setminus\tilde{\mc L},\quad \alpha\in(\tau_F,\tau_F-\tau)
		\,.
	\]
	From this it immediately follows that 
	\[
		\spn\big\{M_\alpha F:\,\alpha\in(\tau_F,\tau_F-\tau)\big\}\cap\tilde{\mc L}=\{0\}
		\,.
	\]
	Clearly, the dimension of this linear span is infinite. Since $\rho$ is injective, we however have 
	$\dim(\mc K/\tilde{\mc L}) = \dim(\rho \mc K/\qu{\rho\mc L})\leq1$, and thus arrived at a contradiction. 
\end{proofof}

\section{Sufficiency of `$\bm{\protect\mathcal{K}(\mc L,W)\neq\emptyset}$'}

The proof of the implication `$(ii)\Rightarrow(i)$' in \thref{N3} is completed in the standard
way based on a Lagrange-type interpolation formula for functions in $\mc L$. 

\begin{lemma}\thlab{N6}
	Let $F\in\mc L$ and $B\in\mc K(\mc L,W)$. Assume in addition that $B$ satisfies 
	\begin{equation}\label{N7}
		y|F(iy)|=o\big(|B(iy)|\big),\quad y\to\pm\infty
		\,,
	\end{equation}
	\begin{equation}\label{N8}
		\sum_{x:\,B(x)=0}\frac{|x|\cdot W(x)}{|B'(x)|}<\infty
		\,.
	\end{equation}
	Then
	\begin{equation}\label{N9}
		zF(z)=\sum_{x:\,B(x)=0}\frac{xF(x)}{B'(x)}\frac{B(z)}{z-x}
		\,,
	\end{equation}
	where the series converges locally uniformly in $\bb C$. 
\end{lemma}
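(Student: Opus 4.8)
The plan is to read the right-hand side of \eqref{N9} as $B(z)$ times a Cauchy transform, to compare it with $\frac{zF(z)}{B(z)}$, and to show that the difference is an entire function forced to vanish.

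\textit{Convergence of the series.} Since $F\in\mc L\subseteq C_0(W)$ we have $|F(x)|\le\|F\|_{C_0(W)}W(x)$, so \eqref{N8} shows that the numbers $c_x:=\frac{xF(x)}{B'(x)}$, indexed by the zeros $x$ of $B$, are absolutely summable: $\sum_x|c_x|\le\|F\|_{C_0(W)}\sum_x\frac{|x|W(x)}{|B'(x)|}<\infty$. Hence $S(z):=\sum_x\frac{c_x}{z-x}$ converges locally uniformly on $\bb C$ minus the zero set of $B$ and defines there a holomorphic function having a simple pole of residue $c_x$ at every zero $x$; moreover $|S(z)|\le|\IM z|^{-1}\sum_x|c_x|\to0$ as $|\IM z|\to\infty$. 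Since every $\frac{B(z)}{z-x}$ (with $B(x)=0$) is entire, splitting the index set at $|x|\le 2(1+\sup_{z\in K}|z|)$ on a compact set $K$ and using {\rm(K4)} shows that $\sum_x c_x\frac{B(z)}{z-x}$ converges locally uniformly on $\bb C$ to an entire function, namely to $B(z)S(z)$; this already settles the convergence assertion in the lemma.

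\textit{Reduction to $\Psi\equiv0$.} Put $\Phi(z):=\frac{zF(z)}{B(z)}$. This is meromorphic with, at each zero $x$ of $B$, a simple pole of residue $\frac{xF(x)}{B'(x)}=c_x$ (if $0$ is a zero, the factor $z$ cancels that pole, consistent with $c_0=0$). Thus $\Psi:=\Phi-S$ has only removable singularities and extends to an entire function, and $zF(z)=B(z)\Phi(z)=B(z)\Psi(z)+\sum_x c_x\frac{B(z)}{z-x}$, so \eqref{N9} is equivalent to $\Psi\equiv0$. By {\rm(K2)} the quotient $\frac FB$ is of bounded type in $\bb C^+$ and in $\bb C^-$; since the monomial $z$ is of bounded type in both half-planes, so is $\Phi$. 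The function $S$ is of bounded type in $\bb C^+$ and $\bb C^-$, being the Cauchy transform of the finite discrete measure $\sum_x c_x\delta_x$. Hence $\Psi$ is entire and of bounded type in both half-planes. Along the imaginary axis $|\Phi(iy)|=\frac{|y|\,|F(iy)|}{|B(iy)|}\to0$ by \eqref{N7} and $|S(iy)|\to0$, so $\Psi(iy)\to0$ as $y\to\pm\infty$; in particular the mean types of $\Psi$ in $\bb C^+$ and $\bb C^-$ are both $\le0$.

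\textit{Killing $\Psi$.} By Krein's theorem an entire function of bounded type in both half-planes is of exponential type, its type is the larger of its two mean types, these two mean types have nonnegative sum, and it satisfies $\int_{\bb R}\frac{\log^+|\Psi(x)|}{1+x^2}\,dx<\infty$. Here both mean types are $\le0$ with nonnegative sum, hence both vanish and $\Psi$ is of exponential type $0$. Being entire, of bounded type and of mean type $0$ in $\bb C^+$, the function $\Psi$ lies in the Smirnov class there, so $\log|\Psi|$ is dominated in $\bb C^+$ by the Poisson integral of $\log|\Psi|\big|_{\bb R}$, and symmetrically in $\bb C^-$; combining this with $\Psi(iy)\to0$ and a Phragm\'en--Lindel\"of argument forces $\Psi$ to be bounded on $\bb C$, whence $\Psi$ is constant by Liouville's theorem and $\Psi\equiv0$ since $\Psi(iy)\to0$. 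Together with the first step this proves \eqref{N9}. The step I expect to be the real obstacle is this last one --- extracting $\Psi\equiv0$ from ``entire, of exponential type $0$, of bounded type with mean type $0$ in both half-planes, and vanishing along $i\bb R$'', that is, controlling $\Psi$ near the real axis at infinity; note that \eqref{N7} and \eqref{N8} are tailored precisely so that $\Psi$ is well defined as an entire function and decays along the imaginary axis, which are exactly the inputs this complex-analytic step needs.
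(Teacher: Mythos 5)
Your argument is correct and takes essentially the same route as the paper: establish locally uniform convergence from \eqref{N8} together with $F\in C_0(W)$, and then show that $\frac{zF(z)}{B(z)}-\sum_{x:\,B(x)=0}\frac{xF(x)}{B'(x)}\frac{1}{z-x}$ (your $\Psi$, the paper's $H$) is entire, of bounded type in both half-planes, and tends to $0$ along the imaginary axis, hence vanishes identically. Your final step merely spells out the standard fact the paper invokes without proof; note that the Smirnov/Poisson majorization there is superfluous (and by itself would not give boundedness, since nothing is known about $|\Psi|$ on $\bb R$): zero exponential type via Krein's theorem, boundedness on $i\bb R$, and Phragm\'en--Lindel\"of in the two half-planes bounded by $i\bb R$ already yield that $\Psi$ is bounded, hence constant, hence $\equiv 0$.
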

\begin{proof}
	The family $\big\{\frac{B(z)}{z-x}:\,x\in\bb R,B(x)=0\big\}$ is locally bounded. Since $F\in C_0(W)$, the series 
	$\sum_{x:\,B(x)=0}\frac{xF(x)}{B'(x)}$ converges absolutely. Together, the right side of \eqref{N9} converges locally uniformly. 

	The function 
	\[
		H(z):=\frac{zF(z)}{B(z)}-\sum_{x:\,B(x)=0}\frac{xF(x)}{B'(x)}\frac{1}{z-x}
	\]
	is entire, since the only possible poles (which are the points $x$ with $B(x)=0$) cancel. 
	It is of bounded type in both half-planes $\bb C^+$ and $\bb C^-$, and tends to $0$ along the imaginary axis. 
	Therefore, it vanishes identically. 
\end{proof}

\noindent
To ensure applicability of this fact, observe the following:

\begin{remark}\thlab{N10}
	Let $B\in\mc K(\mc L,W)$, $n,m\in\bb N$ with $n\geq m$, let $x_1,\dots,x_n$ be pairwise different real points with $B(x_i)\neq 0$, 
	and let $y_1,\dots,y_m$ be pairwise different zeros of $B$. Consider the function 
	\[
		\tilde B(z):=\frac{\prod_{i=1}^n(z-x_i)}{\prod_{j=1}^m(z-y_j)}\cdot B(z)
		\,.
	\]
	Then $\tilde B$ clearly satisfies {\rm(K1)}, {\rm(K2)}. Since $n\geq m$, also {\rm(K3)}
	holds. For each $x\in\bb R$ with $B(x)=0$, $x\ne y_j$, we have 
	\[
		\tilde B'(x)=\frac{\prod_{i=1}^n(x-x_i)}{\prod_{j=1}^m(x-y_j)}\cdot B'(x)
		\,.
	\]
	The zero sets of $B$ and $\tilde B$ differ only by finitely many points, and therefore 
	this relation shows that $\tilde B$ also satisfies {\rm(K4)}. Alltogether, 
	$\tilde B\in\mc K(\mc L,W)$. 
	
	In addition, the function $\tilde B$ satisfies 
	\begin{align*}
		& \tilde B(y_j)\neq 0,\ j=1,\dots,m\,,
			\\[1mm]
		& |y|^{n-m}|F(iy)|=o\big(|\tilde B(iy)|\big),\quad y\to\pm\infty,\ F\in\mc L\,,
			\\[1mm]
		& \sum_{x:\,\tilde B(x)=0}\frac{|x|^{n-m}W(x)}{|\tilde B'(x)|}<\infty
			\,.
	\end{align*}
\end{remark}

\begin{proofof}{\thref{N3}, `$\Leftarrow$'}
	Choose $B\in\mc K(\mc L,W)$ which satisfies \eqref{N7}, \eqref{N8}, and $B(0)\neq 0$, and consider the measure 
	\[
		\mu:=\sum_{x:\,B(x)=0}\frac 1{B'(x)}\delta_x
		\,,
	\]
	where $\delta_x$ denotes the Dirac measure supported on $\{x\}$. Since 
	\[
		\int_{\bb R}W\,d|\mu|=\sum_{x:\,B(x)=0}\frac{W(x)}{|B'(x)|}<\infty
		\,,
	\]
	the functional $\phi:f\mapsto\int_{\bb R}f\,d\mu$ belongs to $C_0(W)'$. 
	Since $C_{00}(\bb R)\subseteq C_0(W)$  and $B$ has at least one zero, $\phi$ does not
	vanish identically. 

	For each $F\in\mc L$, we apply \eqref{N9} with `$z=0$' and obtain 
	\[
		0= - \sum_{x:\,B(x)=0}\frac{xF(x)}{B'(x)}\frac{B(0)}{x}=
		-B(0)\sum_{x:\,B(x)=0}\frac{F(x)}{B'(x)}=-B(0)\phi(F|_{\bb R})
		\,.
	\]
	Thus, $\mc L$ is annihilated by $\mu$, in particular, $\mc L$ is not dense in $C_0(W)$. 
\end{proofof}


{\small

}

{\footnotesize
\begin{flushleft}
	A.\,Baranov\\
	Department of Mathematics and Mechanics\\
	Saint Petersburg State University\\
	28, Universitetski pr.\\
	198504 Petrodvorets\\
	RUSSIA\\
	email: a.baranov@ev13934.spb.edu\\[5mm]
\end{flushleft}
\begin{flushleft}
	H.\,Woracek\\
	Institut f\"ur Analysis und Scientific Computing\\
	Technische Universit\"at Wien\\
	Wiedner Hauptstra{\ss}e.\ 8--10/101\\
	1040 Wien\\
	AUSTRIA\\
	email: harald.woracek@tuwien.ac.at\\[5mm]
\end{flushleft}
}

\end{document}